\newtheorem{assumption}{Assumption}
\newtheorem{defin}{Definition}
\newtheorem{lem}{Lemma}
\newtheorem{rem}{Remark}
\newtheorem{theorem}{Theorem}
\newcommand{\E}{\mathrm{E}}
\newcommand{\R}{\mathrm{Re}}
\def\R{\mathbb{R}}
\def\Z{\mathbb{Z}}
\def\xb{\mathbf{x}}
\def\bx{\boldsymbol{x}}
\def\zbx{\mathbf{X}}
\def\by{\boldsymbol{y}}
\def\bmu{\boldsymbol{\mu}}
\def\r{~}
\begin{document}

\title{\LARGE
Minimizing Regret of Bandit Online Optimization in Unconstrained Action Spaces\let\thefootnote\relax\footnotetext{This research was gratefully funded by the European Union ERC Starting Grant CONENE.}\\
}
\author{Tatiana Tatarenko\thanks{Department of Control Theory and Robotics, TU Darmstadt, Germany} \and Maryam Kamgarpour\thanks{University of British Columbia, Vancouver, Canada}\footnotemark[1]
}

\maketitle

\begin{abstract}
We consider online convex optimization with a zero-order oracle feedback. In particular, the decision maker does not know the explicit representation of the time-varying cost functions, or their gradients. At each time step, she observes the value of the corresponding cost function evaluated at her chosen action (zero-order oracle). The objective is to minimize the regret, that is, the difference between the sum of the costs she accumulates and that of a static optimal action had she known the sequence of cost functions \emph{a priori}.
We present a novel  algorithm to minimize regret in  {unconstrained action spaces}. Our algorithm hinges on a classical idea of one-point estimation of the gradients of the cost functions based on  their observed values. The algorithm is independent of  problem parameters. Letting $T$ denote the number of queries of the zero-order oracle and $n$ the problem dimension, the regret rate achieved is $O(n^{2/3}T^{2/3})$. Moreover, we adapt the presented algorithm to the setting with two-point feedback and demonstrate that the adapted procedure achieves the theoretical lower bound on the regret of $(n^{1/2}T^{1/2})$.
\end{abstract}



\section{Introduction}
\label{sec:intro}
Online convex optimization considers a  time-varying convex objective function with limited information on this function. Due to its  applicability in machine learning over the past decade this problem has been extremely well-studied \cite{shalev2012online, bubeck2012regret}. Past work has considered a plethora of formulations of this problem categorized mainly based on assumptions on the environment (determining the cost) and the decision-maker (equivalently, the learner or algorithm). The environment determines the cost function sequence from a class (linear, strongly convex, convex, smooth). The decision-maker chooses her actions from an action space  (finite, convex, compact, unconstrained) having access to a certain feedback oracle. In particular, the feedback received by the decision-maker can be from  oracle classes of: zero-order, observing only function values at the played action, first-order, observing gradients at the played action, and  full information, observing the function. The measure of performance in this class of problems is \textit{regret}, which is the difference between the accumulated cost for the chosen actions versus the cost corresponding to the optimal static action had the decision-maker had access to the sequence of the cost functions \emph{a priori}.

Deriving lower bounds on the regret achievable and developing algorithms achieving these bounds in uncountable action spaces and with zero-order oracle is an active area of research. The seminal work of \cite{flaxman2005online} extends the online decision-making framework of \cite{zink} from a first-order to a zero-order oracle. The approach introduces randomization in order to derive a one-point estimate of the gradients of the cost functions. Letting $T$ denote the number of queries and $n$ the dimension, this work achieves a regret bound of $O(nT^{3/4})$, applicable to  cost functions with uniformly bounded gradients and a compact convex constraint set. The lower bound for this problem class is established as $\Omega(\sqrt{n^2T})$ \cite{shamir2013complexity}. For the bounded action setting, the follow-the-regularized leader approaches improve the regret bound for Lipschitz losses with Lipschitz gradients: $\tilde{O}(T^{2/3}) $\cite{saha2011improved}, $\tilde{O}(T^{
5/8})$ \cite{dekel2015bandit}, $\tilde{O}(T^{8/13})$ \cite{yang2016optimistic}\footnote{$\tilde{O}$ denotes a potential dependence on $\log(T)$.}. In \cite{agarwal2010optimal}, it was shown that by having access to function values at two query points at each stage (two-point feedback), the optimal regret rate can be achieved in the compact action setting. The dependence of this rate on dimension was improved from quadratic to square root in \cite{shamir2017optimal}. Meanwhile, \cite{hazan2016optimal, bubeck2017kernel} achieve $\tilde{O}(\sqrt{T})$ for compact constraint sets,  at a price of a very high dependence of bounds on problem dimension $n$ and algorithms that lack simplicity of gradient descent type approaches.

The above  results apply to  compact constraint sets and often use the knowledge of the norm of the feasible actions or the existence of self-concordant barriers for the action spaces. The unconstrained action setting has been receiving increasing attention due to its relevance in several learning problems.  The work \cite{mcmahan2014unconstrained} considers unconstrained action spaces but with a first-order oracle and achieves the lower bound of $\Theta(\sqrt{T})$.  Authors in \cite{BachPerchet} address online optimization and consider both constrained and unconstrained action spaces.  They propose two-point and one-point noisy feedback of the cost functions in the unconstrained and constrained action space setting, respectively.  Here,  the regret bounds are refined based on the smoothness degree of the function.  In particular, in the convex Lipschitz losses with Lipschitz gradients they achieve  $O(n^{2/3}T^{2/3})$. However, the points at which the functions are queried differ from the points at which the regret is measured. Thus, they discuss the fact that bandit learning is not considered in their framework. {More recently, \cite{cutkosky2017online,cutkosky2018black} derived a parameter-free
algorithm for  the unconstrained
setting with first-order oracle and achieved optimal regret rates. This line of work was generalized to a noisy first-order oracle, where the noise was sub-exponential \cite{jun2019parameter}. Our work is similar in spirit in that we develop a parameter-free algorithm for unconstrained action spaces. However, in contrast to this past work we consider a zero-order oracle rather than a first-order one and thus, we have to estimate the gradients from function evaluations. }


Our contributions are as follows. We propose a  novel parameter-free  gradient-based algorithm for zero-order  bandit  convex  optimization with {Lipschitz functions and Lipschitz gradients} in unbounded action spaces, with   {$O(n^{2/3}T^{2/3})$} regret rate, consistent with the result in \cite{BachPerchet}, but with  the more restrictive one-point bandit setting.  {As shown in \cite{csaba2016}, with the approach of constructing noisy gradient estimates from one-point feedback the best rate one can reach is $(T^{2/3})$ and this is what we reach}. Our algorithm is arguably simple and does not depend on any constants of the problem.  {The challenge however is on establishing boundedness of the iterates of our algorithm. In particular, since the gradient estimates are obtained from the function evaluations,
we cannot bound the gradient estimates in the unconstrained setting. Hence, our proof technique is considerably different from that of \cite{cutkosky2018black, mcmahan2012no}.  As an additional contribution, we show that assuming one can query the functions at two points, a modified version of our algorithm can achieve the lower bound regret rate of  {$O(n^{1/2}T^{1/2})$}. Our regret rates match the lower bounds of \cite{csaba2016,
shamir2017optimal} in one-point and two-point zero-order feedback respectively, but our proposed algorithms do not require knowing Lipschitz constants of functions or their gradients, bound of a comparator, or the time horizon \textit{a priori}.}

Our approach can be interpreted as stochastic gradient descent on a smoothed version of the cost functions. In particular, by randomizing the query points we obtain a one-point estimate of the gradients of a smoothed version of the cost functions. {This approach is similar  to those based on the Smoothing Lemma \cite{nemirovskii1983problem, flaxman2005online,BachPerchet}. However, we sample the perturbations from a Gaussian distribution, rather than a uniform distribution with compact support. This choice is motivated by the approaches in \cite{Thatha, NesterovGrFree} and the fact that we are not constrained by the requirement of staying inside a feasible set. As such, we choose an appropriate time-varying variance and stepsize parameters for the Gaussian distribution, independent of the problem data, to upper bound the regret. Due to the differences in our setup and approach the regret analysis  of the past work do not apply to our case. Rather, the main step in our convergence analysis is based on finding a  Lyapunov
function to ensure boundedness of the iterates.}

A preliminary and brief version of our work appeared in \cite{tatiana_ECC2018}. We extend our past work in three ways. First, we improve the regret bound from ${O}(T^{3/4})$ to ${O}(T^{2/3})$, based on a modified proof approach inspired from  \cite{BachPerchet}. Second, we update the choice of our variance and step-size parameters to derive the tightest (to the best of our knowledge) dependence of regret on dimension, namely, $n^{2/3} T^{2/3}$ for our setting. Third,  we extend the approach to the setting of two-point zero-order feedback oracle and modify our algorithm to  achieve the regret lower bound of $n^{1/2}T^{1/2}$.

The rest of this paper is organized as follows. In Section\r\ref{sec:problem} we formulate the problem and present some supporting theorems. In Section\r\ref{sec:procedure} we propose the algorithm for the one-point feedback. In Section\r\ref{sec:twopoint} we tighten the regret rates using two-point feedback. We end with concluding remarks in Section\r\ref{sec:conclusion}.

\section{Online Optimization Problem}
\label{sec:problem}
\allowdisplaybreaks
\subsection{Problem formulation}
An unconstrained online convex optimization problem consists of an
infinite sequence $\{c_1, c_2, \ldots \}$, where each $c_t : \R^n \to \R$ is
a convex function.
At each time step $t$, an online convex programming
selects a $\xb_t$ and it receives the  {value of the cost function at the queried point $\hat c_t=c_t(\xb_t)$}.
Efficiency of any online optimization is measured with respect to a regret function defined below.

\begin{defin}\label{def:problem}
Given an algorithm updating $\{\xb_t\}$, and a convex
programming problem $\{c_1 , c_2,\ldots\}$, if $\{\xb_1, \xb_2,\ldots\}$
regret of the algorithm until time $T$ {with respect to a static action $\xb$ is}
\begin{align}\label{eq:regret}
R_{\bx}(T) = \sum_{t=1}^T c_t (\xb_t ) - \sum_{t=1}^T c_t (\bx ).
\end{align}
\end{defin}
The setting is referred to as (oblivious) bandit online optimization.
The goal of the learner is to propose a procedure for the update of $\{\xb_t\}$ such that the average regret function approaches zero, i.e.
$$\varlimsup_{T\to\infty}\frac{R_{\bx}(T)}{T}=\lim_{T\to\infty} \sup_T\frac{R_{\bx}(T)}{T}\le 0, $$
 {with a lowest possible rate dependence on $T$ and on the dimension $n$}.

To address this problem, we need  a set of notations and assumptions. Denote the standard inner product on $\R^n$  by $(\cdot,\cdot)$: $\R^n \times \R^n \to \R$, with the associated norm $\|\bx\|:=\sqrt{(\bx, \bx)}$. {We consider $\{c_1 , c_2,\ldots\}$ belonging to a {class of functions}  satisfying the following assumptions.}

\begin{assumption}\label{assum:Lipsch}
The convex functions $c_t(\bx)$, $t=1, 2, \ldots$, are differentiable and the gradients $\nabla c_t$ are uniformly bounded on $\R^n$.
\end{assumption}

\begin{rem}\label{rem:linbeh}
Since Assumption~\ref{assum:Lipsch} requires uniformly bounded norms of $\nabla c_t$, the functions $c_t(\bx)$, $t=1, 2, \ldots$, grow not faster than a {\color{black}linear} function as $\|\bx\|\to\infty$. Thus, each of these functions is Lipschitz continuous with a Lipschitz constant $l_t$ uniformly bounded by some constant $l$.
\end{rem}

\begin{assumption}\label{assum:Lipsch_grad}
 Each gradient $\nabla c_t$ is Lipschitz continuous on $\R^n$ with some constant $L_t$ and there exists $L$ such that $L_t<L$ for all $t$.
\end{assumption}

 {In the unconstrained action space, to ensure our algorithm iterates do not grow to infinity and  regret  is well-defined we require the following additional assumption.}

\begin{assumption}\label{assum:inftybeh1}
There exists a finite constant $K>0$ such that  for all $t=1,2,\ldots$, $(\bx,\nabla c_t(\bx))>0$, $\forall \|\bx\|^2>K$.
\end{assumption}

\begin{assumption}\label{assum:norm_comp}
The comparator point in the definition of regret \eqref{eq:regret} has finite norm, that is, $\| \bx \| < \infty$.
\end{assumption}

\begin{rem}\label{rem:Assum_inf}
%
Note that the bounds on the Lipschitz constants of the function or their gradients or the bound $K$ and $\| \bx  \|$ are not  needed by the algorithm presented further in the paper and are only used in the analysis.
\end{rem}

Let us discuss the assumptions above in the context of past work. The assumptions on Lipschitz costs with bounded gradients is a regularity condition needed to bound regret and is employed  in all past work dealing with regret in online optimization. In the unconstrained action spaces with zero-order oracle, we use Assumption\r\ref{assum:Lipsch_grad} to bound the difference between the so-called smooth costs and original costs (see proof of our Lemma\r\ref{lem:mixedstr_cost} Part 2 and the use of this Lemma in proof of regret bounds). This assumption is also employed in \cite{saha2011improved}, \cite{dekel2015bandit}, \cite{yang2016optimistic} to improve upon existing regret bounds. Only few works deal with regret in unconstrained action setting.  {In this setting, most works such as  \cite{mcmahan2014unconstrained, cutkosky2017online, cutkosky2018black, jun2019parameter} evaluate regret with respect to a fixed action with a bounded norm.  These works do not perform a one-point estimation of gradients based on zero-order feedback and thus, do not run into difficulty of bounding algorithm iterates or the gradient estimates.  Our Assumption~\ref{assum:inftybeh1} ensures regret is well-defined in the unconstrained case with respect to any comparator with a  finite norm and furthermore, ensures boundedness of the iterates of the algorithm despite having no bound on the estimate of the gradients.  Furthermore, while some past works on unconstrained regret minimization require knowledge of the Lipschitz constants or the comparator's norm for the algorithm design, our work is similar to \cite{jun2019parameter} in that our algorithm  does not require any parameters of the problem.

Finally, to provide further intuition into Assumption~\ref{assum:inftybeh1}, note that if the functions $c_t$ are coercive, then this assumption is equivalent to minima of these functions being attained within a ball of radius $K$.  Indeed, if  $c_t$ is coercive, namely $\lim_{\|\bx\|\to\infty}c_t(\bx)= \infty$, there exists $K$ such that $c_t(\bx)>c_t(\boldsymbol 0)$ for all $\bx$ such that $\|\bx\|^2>K$. Thus,
\[c_t(\boldsymbol 0)\ge c_t(\bx) + (\nabla c_t(\bx), \boldsymbol 0 - \bx)>c_t(\boldsymbol 0) -(\nabla c_t(\bx),  \bx),\]
for any such $\bx$, where the first inequality is due to convexity. The above implies $(\nabla c_t(\bx),  \bx) > 0$ on the set $\{\bx: \, \|\bx\|^2>K\}$. On the other hand, if $(\bx,\nabla c_t(\bx))>0$, $\forall \|\bx\|^2>K$, there is no minima of $c_t$ on the set $\{\bx: \, \|\bx\|^2>K\}$ (due to the first-order optimality condition). Hence, the continuous function $c_t$ attains its minimum on the compact set $\{\bx: \, \|\bx\|^2\le K\}$. If the functions $c_t$ are not coercive, then the optima may  be not uniformly bounded. Despite this, Assumption~\ref{assum:inftybeh1} enables us to bound the norm of the iterates of the algorithm (see Lemma~\ref{lem:lem1} below) and, ultimately, to be able to derive a regret rate that holds with respect to any comparator with a bounded norm, please see statements of Theorems~\ref{th:main} and \ref{th:main2P} and proofs.  Geometrically, this assumption ensures that as iterates get large, the (approximated) gradient step moves in a direction that the iterates' norm would be reduced.
}
\newpage

\section{Proposed Online Optimization Algorithm}\label{sec:procedure}

\subsection{One-point estimate of the gradients}\label{sec:unconstrained}
 {The idea of the algorithm is  simple. We mimic gradient descent as per the online learning approach in the seminal work of \cite{zink}. However, since in zero-order oracle gradients are unavailable, we use the widely employed idea of one-point estimate of the gradients based on the function values at queried points.  Thus, by perturbing the query points with a zero-mean noise and appropriately scaling the queried function values, we obtain an unbiased estimate of the gradients. In contrast to methods that sample the noise from a uniform distribution, in the unconstrained setting we use  Gaussian distribution. This enables our algorithm to be parameter free. We are now ready to formally state the algorithm.}

The proposed algorithm is as follows. At each time $t$ the decision maker chooses the vector $\xb_t$ according to the $n$-dimensional normal distribution $\EuScript N(\bmu_t,\sigma_t)$, $\bmu_t = (\mu_t^1,\ldots,\mu_t^n)$, meaning that the coordinates $x_t^1,\ldots,x_t^n$ of the random vector $\xb_t$ are independently distributed with the mean values $\mu_t^1\ldots,\mu_t^n$ and the variance $\sigma_t$. The mean value initial condition $\bmu_0$ can be chosen arbitrarily. The iterate $\bmu_t$ is updated using the observed value of the cost function $\hat{c}_t = c_t(\xb_t)$ as follows
\begin{align}\label{eq:alg}
\xb_t &\sim\EuScript N(\bmu_t,\sigma_t),\\
\nonumber
 \bmu_{t+1} &= \bmu_t - \alpha_t \hat c_t\frac{\xb_t - \bmu_t}{\sigma^2_t}.
\end{align}
Before presenting the regret bound of our proposed   algorithm, we provide some insights. In particular, we show that our algorithm can be interpreted as a stochastic optimization procedure.
First, let  the $\sigma$-algebra  $\EuScript F_t = \sigma(\{\bmu_k\}_{k\le t})$ be generated by the random variables $\{\bmu_k\}_{k\le t}$. The corresponding filtration $\{\EuScript F_t\}_t$ is a standard object used in analysis of stochastic processes (see, for example, \cite{NH}, basic properties of Markov processes after Definition~2.1.2). Further, let $\E_{t}\{\cdot\}$  denote the conditional expectation of a random variable with respect to the $\sigma$-algebra  $\EuScript F_t $, i.e. $\E_{t}\{\cdot\} = \E \{\cdot | \EuScript F_t\}$.


Let us introduce the smoothed cost as
\begin{align}\label{eq:mixedstr_cost}
\tilde c_t(\bmu_t) = \int_{\R^n} c_t(\bx)p(\bmu_t,\sigma_t,\bx)d\bx,
\end{align}
where $\bx=(x^1,\ldots,x^n)$ and $$p(\bmu_t,\sigma_t,\bx)=\frac{1}{(\sqrt{2\pi}\sigma_t)^{n}}\exp\left\{-\sum_{k=1}^{n}\frac{(x^k-\mu^k_t)^2}{2\sigma_t^2}\right\},$$
is the density of $\EuScript N(\bmu_t,\sigma_t)$. Thus, $\tilde c_t(\bmu_t)$  is the expectation $\E_{t}\{c_t(\xb_t)\}$ of the random variable $c_t(\xb_t)$, given that $\xb_t$ has the normal distribution $\EuScript N(\bmu_t,\sigma_t)$. The iteration \eqref{eq:alg} can then be rewritten as a stochastic gradient descent on the the function $\tilde c_t(\bmu_t)$:
\begin{align}
\label{eq:alg1}
 \bmu_{t+1} = \bmu_t - \alpha_t\nabla\tilde c_t(\bmu_t)+\alpha_t \xi_t(\xb_t,\bmu_t, \sigma_t),
\end{align}
where
$$\xi_t(\xb_t,\bmu_t, \sigma_t) = \nabla\tilde c_t(\bmu_t)-\hat c_t\frac{\xb_t - \bmu_t}{\sigma^2_t}.$$
\begin{lem}\label{lem:zeta_martingale}
Under Assumption\r\ref{assum:Lipsch}, the following two equalities hold:
\begin{align}\label{eq:martdiff}
\E_{t}\xi_t(\xb_t,\bmu_t, \sigma_t) &= \nabla\tilde c_t(\bmu_t)-\E_{t}\{\hat c_t\frac{\xb_t - \bmu_t}{\sigma^2_t}\} = 0.\\
\label{eq:martdiff1}
\nabla\tilde c_t(\bmu_t) &= \int_{\R^n}\nabla c_t(\bx)p(\bmu_t,\sigma_t,\bx)d\bx.
\end{align}
\end{lem}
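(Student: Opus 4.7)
The plan is to view both identities as instances of the score-function trick for Gaussian smoothing, and to deduce them from a single computation of $\nabla_{\bmu}\tilde c_t(\bmu_t)$ under the integral. The only analytic subtlety is the justification of differentiation under the integral sign and of an integration by parts; both rely on the linear growth of $c_t$ (Remark~\ref{rem:linbeh}, which follows from Assumption~\ref{assum:Lipsch}) combined with the super-polynomial decay of the Gaussian density $p(\bmu_t,\sigma_t,\bx)$.

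First I would establish \eqref{eq:martdiff}. Differentiating the Gaussian density in its mean gives the score identity
\[
\nabla_{\bmu} p(\bmu_t,\sigma_t,\bx) \;=\; \frac{\bx-\bmu_t}{\sigma_t^2}\,p(\bmu_t,\sigma_t,\bx).
\]
Formally exchanging $\nabla_{\bmu}$ with the integral in \eqref{eq:mixedstr_cost} then yields
\[
\nabla\tilde c_t(\bmu_t) \;=\; \int_{\R^n} c_t(\bx)\,\frac{\bx-\bmu_t}{\sigma_t^2}\,p(\bmu_t,\sigma_t,\bx)\,d\bx \;=\; \E_t\!\left\{\hat c_t\,\frac{\xb_t-\bmu_t}{\sigma_t^2}\right\},
\]
which is exactly the statement $\E_t\xi_t(\xb_t,\bmu_t,\sigma_t)=0$. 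To justify the interchange I would apply the dominated convergence theorem: by Remark~\ref{rem:linbeh}, $|c_t(\bx)|\le c_t(\bmu_t)+l\|\bx-\bmu_t\|$, while the integrand $c_t(\bx)\,(x^k-\mu_t^k)\,p(\bmu_t,\sigma_t,\bx)/\sigma_t^2$ and its $\bmu$-derivative are, locally uniformly in $\bmu_t$, dominated by a polynomial in $\|\bx\|$ times a Gaussian, which is integrable.

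Next I would derive \eqref{eq:martdiff1} from the same starting point by integrating by parts in $\bx$. Writing $(x^k-\mu_t^k)p/\sigma_t^2 = -\partial_{x^k}p(\bmu_t,\sigma_t,\bx)$ and differentiating componentwise,
\[
\int_{\R^n} c_t(\bx)\,\frac{x^k-\mu_t^k}{\sigma_t^2}\,p\,d\bx
\;=\; -\int_{\R^n} c_t(\bx)\,\partial_{x^k} p\,d\bx
\;=\; \int_{\R^n} \partial_{x^k}c_t(\bx)\,p\,d\bx,
\]
since the boundary terms vanish: $c_t(\bx)\,p(\bmu_t,\sigma_t,\bx)\to 0$ as $\|\bx\|\to\infty$ by the linear growth of $c_t$ and Gaussian decay, and $\nabla c_t$ is bounded by Assumption~\ref{assum:Lipsch} so all one-dimensional boundary integrals in the iterated Fubini decomposition are controlled. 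Assembling the components gives $\nabla\tilde c_t(\bmu_t)=\int_{\R^n}\nabla c_t(\bx)\,p(\bmu_t,\sigma_t,\bx)\,d\bx$.

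The only real obstacle is bookkeeping for these two interchanges; both are routine given the Gaussian tails, but I would be careful to state the dominating function explicitly and to note that no boundedness of $c_t$ itself is required — only the linear growth implied by uniformly bounded gradients. Once these are in place, \eqref{eq:martdiff} and \eqref{eq:martdiff1} are two different ways of reading the same computation of $\nabla_{\bmu}\tilde c_t(\bmu_t)$, and the lemma follows immediately.
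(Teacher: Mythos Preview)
Your proposal is correct and follows essentially the same two-step route as the paper: justify differentiation under the integral to obtain the score-function representation, then integrate by parts componentwise via $(x^k-\mu_t^k)p/\sigma_t^2=-\partial_{x^k}p$ with boundary terms killed by linear growth of $c_t$ against Gaussian decay. The only cosmetic difference is in building the majorant: the paper substitutes the mean-value form $c_t(\bx)=c_t(\bmu_t)+(\nabla c_t(\boldsymbol\eta),\bx-\bmu_t)$ and changes variables $\by=\bx-\bmu_t$ to get a $\bmu_t$-independent dominating function $l\|\by\|^2 p(\boldsymbol 0,\sigma_t,\by)$, whereas your Lipschitz bound leaves the $\bmu_t$-dependent density and so needs the ``locally uniform'' qualifier --- both work.
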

\begin{proof}
See Appendix\r\ref{app:integral_proofs}.
\end{proof}

The interpretation of the lemma above is that in  Procedure \eqref{eq:alg}, we use unbiased random estimations of the gradients of a smooth version of the cost functions
\begin{align*}
\E_{t}\left\{\hat c_t\frac{\xb_t - \bmu_t}{\sigma^2_t}\right\}=\nabla \int_{\R^n} c_t(\bx)p(\bmu_t,\sigma_t,\bx)d\bx.
\end{align*}

We derive some further properties of the terms in Procedure\r\eqref{eq:alg} that will be used in the convergence analysis. Their full proofs can be found in Appendix\r\ref{app:integral_proofs}.

\begin{lem}
\label{lem:mixedstr_cost}
Under Assumptions~\ref{assum:Lipsch}, \ref{assum:Lipsch_grad}, and\r\ref{assum:inftybeh1}, the functions $\tilde c_t(\bmu_t)$, $t=1,\ldots$ enjoy the following properties:
\begin{enumerate}
\item $\tilde c_t(\bmu_t)$, $t=1,\ldots$, are convex on $\R^n$ and their gradients $\nabla \tilde c_t(\bmu_t)$ are uniformly bounded.
\item For $t=1, \ldots$
\begin{align}\label{eq:deviation}
|c_t(\bmu_t)-\tilde c_t(\bmu_t)|\le \frac{nL\sigma_t^2}{2}, \; \forall \bmu_t \in \R^n.
\end{align}
\item There exists a finite constant $\tilde K>0$ such that $(\bmu,\nabla \tilde c_t(\bmu))>0$ for $\|\bmu\|^2>\tilde K$, $\forall t$.
\end{enumerate}
\end{lem}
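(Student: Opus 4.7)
My plan is to dispatch the three claims separately, working from the representation $\tilde c_t(\bmu) = \E_{\zb\sim \EuScript N(\zz,I_n)}[c_t(\bmu+\sigma_t\zb)]$ together with the gradient formula $\nabla \tilde c_t(\bmu) = \E_{\zb}[\nabla c_t(\bmu+\sigma_t\zb)]$ already supplied by Lemma~\ref{lem:zeta_martingale}.

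For Part 1, convexity is inherited pointwise: each $\bmu \mapsto c_t(\bmu+\sigma_t\zb)$ is convex and integration against a probability density preserves convexity. The gradient bound then follows from Jensen, $\|\nabla \tilde c_t(\bmu)\| \le \E\|\nabla c_t(\bmu+\sigma_t\zb)\| \le l$, with $l$ the uniform gradient bound of Assumption~\ref{assum:Lipsch}/Remark~\ref{rem:linbeh}. For Part 2, I would invoke Assumption~\ref{assum:Lipsch_grad} in its descent-lemma form together with convexity to get the two-sided bound
\[
0 \;\le\; c_t(\bx) - c_t(\bmu) - (\nabla c_t(\bmu),\bx-\bmu) \;\le\; \frac{L}{2}\|\bx-\bmu\|^2.
\]
Substituting $\bx = \bmu + \sigma_t\zb$ and taking the Gaussian expectation, the linear cross term vanishes and $\E\|\sigma_t\zb\|^2 = n\sigma_t^2$, delivering $0 \le \tilde c_t(\bmu) - c_t(\bmu) \le \frac{nL\sigma_t^2}{2}$.

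Part 3 is the delicate step, and my plan is to combine Parts 1 and 2 with the coercivity that Assumption~\ref{assum:inftybeh1} forces on $c_t$. Convexity of $\tilde c_t$ evaluated at $\bmu$ and $\zz$ gives $(\bmu, \nabla \tilde c_t(\bmu)) \ge \tilde c_t(\bmu) - \tilde c_t(\zz)$, and applying Part 2 to both sides yields $\tilde c_t(\bmu) - \tilde c_t(\zz) \ge c_t(\bmu) - c_t(\zz) - nL\sigma_t^2$. It therefore suffices to show that $c_t(\bmu) - c_t(\zz) \to \infty$ as $\|\bmu\|\to\infty$, uniformly in $t$. Along any unit direction $\hat\bmu$, the map $r \mapsto (\hat\bmu, \nabla c_t(r\hat\bmu))$ is non-decreasing by convexity and strictly positive for $r>\sqrt K$ by Assumption~\ref{assum:inftybeh1}; the standard one-dimensional observation that a convex function with bounded gradient which is eventually strictly increasing must be unbounded above then gives $c_t(r\hat\bmu)\to\infty$ along every ray. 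A compactness argument over directions, together with boundedness of $\sigma_t$ under the algorithm's decreasing schedule, would then produce a finite uniform threshold $\tilde K$.

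The principal obstacle is exactly this last step: Assumption~\ref{assum:inftybeh1} supplies only strict positivity and not a quantitative lower bound on $(\bx, \nabla c_t(\bx))$, so to promote ray-wise coercivity into a uniform rate in direction and in $t$ one has to lean carefully on convexity together with the uniform bounds $l,L$. Should the direct argument above leave a gap, my fallback is to use Gaussian integration by parts to rewrite $(\bmu, \nabla \tilde c_t(\bmu)) = \E[(\bx,\nabla c_t(\bx))] - \sigma_t^2 \E[\Delta c_t(\bx)]$, and then use Gaussian concentration of $\bx\sim\EuScript N(\bmu,\sigma_t^2 I)$ to localize the first expectation on $\{\|\bx\|^2>K\}$ where the integrand is nonnegative, with the $\sigma_t^2 nL$ overhead being absorbed once $\|\bmu\|^2$ is sufficiently large.
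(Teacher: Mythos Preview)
Your approach matches the paper's for all three parts: convexity by integrating a convex integrand against a probability measure, the gradient bound via \eqref{eq:martdiff1} and Assumption~\ref{assum:Lipsch}, Part~2 from the descent-lemma/Taylor expansion with the linear term integrating to zero and the quadratic contributing $n L\sigma_t^2/2$, and Part~3 by first arguing coercivity of $\tilde c_t$ (from Part~2 together with bounded $\sigma_t$ and the discussion following Assumption~\ref{assum:inftybeh1}) and then replaying the convexity inequality $(\bmu,\nabla\tilde c_t(\bmu))\ge \tilde c_t(\bmu)-\tilde c_t(\zz)>0$ for large $\|\bmu\|$. The uniformity-in-$t$ obstacle you flag for Part~3 is real but is not handled any more carefully by the paper's one-sentence proof, so your explicit ray-wise argument and Stein-identity fallback go beyond, rather than depart from, what the paper provides.
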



\begin{lem}\label{lem:234_moments} Under Assumption~\ref{assum:Lipsch} for any $t$
 \begin{align*}
  \E_{t}\{\|\xi_t(\xb_t,\bmu_t,\sigma_t)\|^2\}\le \frac{f_1(\bmu_t, \sigma_t)}{\sigma_t^2}, \\
  \E_{t}\{\|\xi_t(\xb_t,\bmu_t,\sigma_t)\|^3\}\le \frac{f_2(\bmu_t, \sigma_t)}{\sigma_t^3},\\
  \E_{t}\{\|\xi_t(\xb_t,\bmu_t,\sigma_t)\|^4\}\le \frac{f_3(\bmu_t, \sigma_t)}{\sigma_t^4},
 \end{align*}
 where $f_1(\bmu_t, \sigma_t)$, $f_2(\bmu_t, \sigma_t)$, and $f_3(\bmu_t, \sigma_t)$ are polynomials of $\sigma_t$ and are second, third, and fourth order polynomials of $\mu^i_t$, $i\in[n]$, respectively. Moreover,
  \begin{align}\label{eq:dim_depend}
  \E_{t}\{\|\xi_t(\xb_t,\bmu_t,\sigma_t)\|^2\}= O\left(\frac{nl^2\|\bmu_t\|^2}{\sigma_t^2} + 1\right).
  \end{align}
\end{lem}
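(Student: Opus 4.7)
The plan is to bound all three moments in parallel by decomposing
\[
\xi_t = \nabla\tilde c_t(\bmu_t) - \hat c_t\,\frac{\xb_t - \bmu_t}{\sigma_t^2}
\]
via the triangle inequality together with $(a+b)^p \le 2^{p-1}(a^p+b^p)$, giving
\[
\|\xi_t\|^p \le 2^{p-1}\|\nabla\tilde c_t(\bmu_t)\|^p + \frac{2^{p-1}\,|\hat c_t|^p\,\|\xb_t-\bmu_t\|^p}{\sigma_t^{2p}}.
\]
By Lemma~\ref{lem:mixedstr_cost} Part 1, $\|\nabla\tilde c_t(\bmu_t)\|$ is uniformly bounded by a constant (in fact by $l$), so the first summand contributes an additive constant after $\E_t$. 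All the work is in the second summand.

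To handle $\E_t\{|\hat c_t|^p\|\xb_t-\bmu_t\|^p\}$, I would bound $|\hat c_t|$ using the Lipschitz property from Remark~\ref{rem:linbeh}:
\[
|\hat c_t|=|c_t(\xb_t)| \le |c_t(\mathbf{0})| + l\|\bmu_t\| + l\|\xb_t-\bmu_t\|.
\]
Raising to the $p$-th power and expanding produces a finite sum of monomials $C_{ij}\|\bmu_t\|^i\|\xb_t-\bmu_t\|^j$ with $i+j \le p$. Multiplying by $\|\xb_t-\bmu_t\|^p$ and taking conditional expectations reduces the computation to evaluating $\E_t\{\|\xb_t-\bmu_t\|^k\}$ for $k\le 2p$. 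Since $\xb_t-\bmu_t \sim \EuScript N(\mathbf{0},\sigma_t^2 I_n)$, we have $\E_t\{\|\xb_t-\bmu_t\|^{2m}\} = \sigma_t^{2m}\,n(n+2)\cdots(n+2m-2)$ for even orders, and odd orders can be bounded by Cauchy--Schwarz from adjacent even ones; in every case this is $\sigma_t^k$ times a polynomial in $n$. Dividing by $\sigma_t^{2p}$ then produces a polynomial in $\sigma_t$ and in $\|\bmu_t\|$ (of total degree $p$) divided by $\sigma_t^p$, matching the claimed form $f_{p-1}(\bmu_t,\sigma_t)/\sigma_t^p$.

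The main subtle point will be matching the \emph{polynomial-in-coordinates} form claimed for $f_1,f_2,f_3$. Odd powers of $\|\bmu_t\|$ arising from the expansion above are not strictly polynomial in the $\mu_t^i$, but they can be absorbed into even-degree monomials via elementary inequalities such as $\|\bmu_t\|^{2k+1}\le \tfrac12(\|\bmu_t\|^{2k}+\|\bmu_t\|^{2k+2})$, at the cost of raising the stated polynomial degrees by at most one; careful bookkeeping here is where most of the effort goes.

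For the dimension-dependent bound \eqref{eq:dim_depend}, I would specialize to $p=2$ and track only the leading order in $\|\bmu_t\|$. Using $\E_t\{\|\xb_t-\bmu_t\|^2\}=n\sigma_t^2$ and $\E_t\{\|\xb_t-\bmu_t\|^4\}=n(n+2)\sigma_t^4$, the dominant contribution is the cross-term
\[
\frac{2l^2\|\bmu_t\|^2\,\E_t\{\|\xb_t-\bmu_t\|^2\}}{\sigma_t^4} = \frac{2nl^2\|\bmu_t\|^2}{\sigma_t^2},
\]
while the remaining pieces ($\|\nabla\tilde c_t\|^2$, $|c_t(\mathbf{0})|^2 n/\sigma_t^2$, and $2l^2 n(n+2)$) are independent of $\bmu_t$ and are absorbed into the additive $O(1)$, yielding the stated rate.
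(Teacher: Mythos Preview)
Your approach is correct and takes a genuinely different route from the paper's. The paper first invokes Lyapunov's inequality to reduce all three bounds to the fourth moment, then works \emph{coordinate-wise}: it expands $\E_t\|\xi_t\|^4$ into $\sum_i \E_t\eta_i^4$ plus H\"older-controlled cross terms, where $\eta_i = \hat c_t(x_t^i-\mu_t^i)/\sigma_t^2$, and estimates each $\E_t\eta_i^4$ via mixed Gaussian moments of the form $\int (x^i)^{s_1}\cdots(x^l)^{s_4}(x^i-\mu_t^i)^4\,p\,d\bx$. Your route---split $\xi_t$ by the triangle inequality, bound $|\hat c_t|$ by $|c_t(\mathbf 0)|+l\|\bmu_t\|+l\|\xb_t-\bmu_t\|$, and reduce everything to radial moments $\E_t\|\xb_t-\bmu_t\|^k$---is more elementary and avoids both the Lyapunov reduction and the coordinate bookkeeping. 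The trade-off is exactly the odd-power issue you flagged: the paper's coordinate computation automatically produces genuine polynomials in the $\mu_t^i$ (Gaussian mixed moments are polynomial in the means), so it matches the stated form of $f_1,f_2,f_3$ without the extra $\|\bmu_t\|^{2k+1}\le\tfrac12(\|\bmu_t\|^{2k}+\|\bmu_t\|^{2k+2})$ patch.

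One point to tighten in your last paragraph: the terms $|c_t(\mathbf 0)|^2\,n/\sigma_t^2$ and $2l^2 n(n+2)$ are \emph{not} $O(1)$ in either $n$ or $\sigma_t$, so they cannot be absorbed into the additive constant in \eqref{eq:dim_depend} as you claim. (The paper's own derivation is equally loose here: its ``$+\,m_2$'' after the step $c_t^2(\bx)\lesssim l^2\|\bx\|^2$ hides the same $n/\sigma_t^2$ contribution coming from $c_t(\mathbf 0)^2$.) For the downstream use---where $\|\bmu_t\|$ is bounded almost surely by Lemma~\ref{lem:lem1} and one only needs the scaling $n/\sigma_t^2$---this is harmless, but you should state the residual honestly rather than call it $O(1)$.
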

 Equipped with the  parallels of the proposed Algorithm \eqref{eq:alg} with the stochastic gradient procedure in \eqref{eq:alg1} and its properties formulated in the lemmas above, we are ready to present the regret bounds for the procedure.

\subsection{Derivation of regret bounds}\label{sec:convergence}

\begin{theorem}\label{th:main}
Let \eqref{eq:alg} define the optimization algorithm for the unconstrained online convex optimization problem $(\R^n,\{c_1,c_2,\ldots\})$. Choose the step-sizes and variances according to   {$\{\alpha_t = \frac{1}{(nt)^a}\}$, $\{\sigma_t= \frac{1}{(nt)^b}\}$, where $0<a<1$, $b>0$, $2a-2b>1$. Then,  under Assumptions\r\ref{assum:Lipsch}-\ref{assum:inftybeh1},
the regret of  algorithm \eqref{eq:alg} estimated with respect to any ${\bx }$ with bounded norm satisfies
\begin{align*}
  \E\left\{\frac{R_{\bx}(T)}{T}\right\} =(M^2+\|\bx\|^2)O\left(\frac{n^a}{T^{1-a}}\right) + l^2O\left(\frac{n^{1-a+2b}}{T^{a-2b}}\right)+ LO\left(\frac{n^{1-2b}}{T^{2b}}\right)
\end{align*}
In particular, $$ \varlimsup_{T\to\infty} \E \left\{\frac{R_{\bx}(T)}{T}\right\}\le 0,$$
for any $a$, $b$ satisfying the assumption above. For the optimal choice of $a \to\frac{2}{3} $, $b\to\frac{1}{6}$, the expected regret of the query points satisfies the bound:
\begin{align*}
\E\left\{\frac{R_{\bx}(T)}{T}\right\} = (M^2+\|\bx\|^2+L+l^2)O\left(\frac{n^{\frac{2}{3}}}{T^{\frac{1}{3}}}\right).
\end{align*}}

\end{theorem}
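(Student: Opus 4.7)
The plan is to reduce the regret to a standard SGD-style telescoping on the \emph{smoothed} costs $\tilde c_t$. By the tower property together with Lemma~\ref{lem:zeta_martingale}, $\E c_t(\xb_t) = \E \tilde c_t(\bmu_t)$, hence
\[
\E R_{\bx}(T) = \sum_{t=1}^T \E[\tilde c_t(\bmu_t) - \tilde c_t(\bx)] + \sum_{t=1}^T [\tilde c_t(\bx) - c_t(\bx)].
\]
The second sum is controlled by $\tfrac{nL}{2}\sum_t \sigma_t^2$ via Lemma~\ref{lem:mixedstr_cost} Part~2 (this is the only place Assumption~\ref{assum:Lipsch_grad} enters). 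For the first sum, convexity of $\tilde c_t$ (Lemma~\ref{lem:mixedstr_cost} Part~1) provides $\tilde c_t(\bmu_t)-\tilde c_t(\bx) \le (\nabla\tilde c_t(\bmu_t),\,\bmu_t-\bx)$, reducing the problem to bounding $\sum_t \E(\nabla\tilde c_t(\bmu_t),\bmu_t-\bx)$.

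Next I would apply the standard SGD expansion. Writing $g_t = \hat c_t(\xb_t-\bmu_t)/\sigma_t^2$ so that $\bmu_{t+1}=\bmu_t-\alpha_t g_t$ and $\E_t g_t = \nabla\tilde c_t(\bmu_t)$, expanding $\|\bmu_{t+1}-\bx\|^2$ and taking $\E_t$ yields
\[
(\nabla\tilde c_t(\bmu_t),\bmu_t-\bx) \le \frac{1}{2\alpha_t}\bigl(\|\bmu_t-\bx\|^2-\E_t\|\bmu_{t+1}-\bx\|^2\bigr) + \frac{\alpha_t}{2}\E_t\|g_t\|^2.
\]
Summing, taking total expectation, and performing Abel summation against the non-increasing sequence $\{\alpha_t\}$ telescopes the first piece to roughly $\sup_t \E\|\bmu_t-\bx\|^2/\alpha_T$. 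The noise piece is then controlled via Lemma~\ref{lem:234_moments}, specifically equation~\eqref{eq:dim_depend}, which gives $\E\|g_t\|^2 = O(nl^2\|\bmu_t\|^2/\sigma_t^2 + 1)$.

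The hard step is therefore the uniform bound $\sup_t \E\|\bmu_t\|^2 \le M^2 < \infty$; without it neither of the two summands above is finite, and this is the genuinely new ingredient in the unconstrained one-point setting (the gradient estimate $g_t$ itself being unbounded). This is precisely the content of the forthcoming Lemma~\ref{lem:lem1}, and the only place where Assumption~\ref{assum:inftybeh1} plays its role. The approach I would take is a Lyapunov argument on $V(\bmu)=\|\bmu\|^2$: whenever $\|\bmu_t\|^2 > \tilde K$, Lemma~\ref{lem:mixedstr_cost} Part~3 supplies a strictly negative drift $-2\alpha_t(\nabla\tilde c_t(\bmu_t),\bmu_t)$, whereas the stochastic perturbation contributes $\alpha_t^2\E_t\|g_t\|^2 = O((\alpha_t^2/\sigma_t^2)\|\bmu_t\|^2)$. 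The condition $2a-2b>1$ is exactly what makes $\sum_t \alpha_t^2/\sigma_t^2$ finite, so the noise cannot overwhelm the drift and one obtains the claimed bound $M^2$ via a supermartingale-type comparison.

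With $\E\|\bmu_t-\bx\|^2\le 2(M^2+\|\bx\|^2)$ in hand, the three terms in the theorem statement fall out after substituting $\alpha_t=(nt)^{-a}$, $\sigma_t=(nt)^{-b}$. The telescoped piece contributes $(M^2+\|\bx\|^2)(nT)^a/T$; the noise sum $\sum_t (\alpha_t/\sigma_t^2)\cdot nl^2 M^2$ evaluates to $l^2 M^2 n^{1+2b-a}T^{1+2b-a}/T$ via $\sum_t t^{2b-a} = O(T^{1+2b-a})$; and the smoothing sum $\sum_t \sigma_t^2\cdot nL$ yields $L n^{1-2b}T^{1-2b}/T$. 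Balancing the $T$-exponents $1-a = a-2b = 2b$ subject to $2a-2b>1$ produces the choice $a\to 2/3$, $b\to 1/6$ and the final rate $O(n^{2/3}/T^{1/3})$.
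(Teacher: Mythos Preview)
Your proposal is correct and follows essentially the same route as the paper: reduce to the smoothed costs, linearize via convexity of $\tilde c_t$, run the SGD telescoping on $\|\bmu_t-\bx\|^2$, and invoke the iterate-boundedness Lemma~\ref{lem:lem1} together with the variance bound~\eqref{eq:dim_depend}. Two minor differences: your decomposition $\E R_{\bx}(T)=\sum_t\E[\tilde c_t(\bmu_t)-\tilde c_t(\bx)]+\sum_t[\tilde c_t(\bx)-c_t(\bx)]$ via $\E c_t(\xb_t)=\E\tilde c_t(\bmu_t)$ is slightly more direct than the paper's, which passes through an intermediate regret $R^{\bmu_t}_{\bx}$ and applies Lemma~\ref{lem:mixedstr_cost} Part~2 twice; and for Lemma~\ref{lem:lem1} the paper does not use $V(\bmu)=\|\bmu\|^2$ but the truncated quartic $V(\bmu)=(\|\bmu\|^2-\tilde K)_+^2$, which kills the drift on the ball $\{\|\bmu\|^2\le\tilde K\}$ at the cost of needing the third and fourth moment bounds of Lemma~\ref{lem:234_moments} and a Nevel'son--Has'minskii convergence theorem to conclude almost-sure boundedness --- though, as the paper itself observes in Remark~\ref{rem:expect}, the weaker moment bound your quadratic Lyapunov would deliver is all that the theorem requires.
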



To prove the main theorem above, we first show that under the conditions of this theorem, the mean values $\{\bmu_t\}$ stay almost surely bounded during the process \eqref{eq:alg}.

\begin{lem}\label{lem:lem1} {Consider the optimization algorithm  \eqref{eq:alg} with step-size $\{\alpha_t = O\left(\frac{1}{t^a}\right)\}$ and variance $\{\sigma_t= O\left(\frac{1}{t^b}\right)\}$, where $0<a<1$, $b>0$, $2a-2b>1$. There exists a finite constant $M$ such that
$$\Pr\{\|\bmu_t\|\le M, t=1,2,\ldots\,|\,\|\bmu_{0}\|<\infty\}=1$$
for all $\{c_1,c_2,\ldots\}$ satisfying Assumptions\r\ref{assum:Lipsch} and\r\ref{assum:inftybeh1}.
In words, given any $\bmu_0$ with a bounded norm, $\|\bmu_t\|$ is bounded almost surely by a constant $M$, uniformly with respect to time $t$ and the sequences of cost functions $\{c_1,c_2,\ldots\}$.}
\end{lem}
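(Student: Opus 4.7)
The plan is to use the squared norm $V_t := \|\bmu_t\|^2$ as a Lyapunov function, derive a Robbins--Siegmund-type almost-supermartingale inequality on the complement of the compact region $\{\|\bmu\|^2\le \tilde K\}$ identified in Lemma~\ref{lem:mixedstr_cost}(3), and conclude a.s.\ boundedness from the summability $\sum_t \alpha_t^2/\sigma_t^2<\infty$ afforded by the condition $2a-2b>1$. Starting from the stochastic-gradient representation \eqref{eq:alg1} of the update, I would expand
\[
V_{t+1} = V_t - 2\alpha_t(\bmu_t,\nabla\tilde c_t(\bmu_t)) + 2\alpha_t(\bmu_t,\xi_t) + \alpha_t^2\,\|\nabla\tilde c_t(\bmu_t)-\xi_t\|^2,
\]
and take the conditional expectation given $\EuScript F_t$. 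The cross term vanishes by $\E_t\xi_t=0$ from Lemma~\ref{lem:zeta_martingale}, leaving
\[
\E_t V_{t+1} = V_t - 2\alpha_t(\bmu_t,\nabla\tilde c_t(\bmu_t)) + \alpha_t^2\,\E_t\|\nabla\tilde c_t(\bmu_t)-\xi_t\|^2.
\]

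The quadratic term I would bound by combining the uniform boundedness of $\|\nabla\tilde c_t\|$ from Lemma~\ref{lem:mixedstr_cost}(1) with the moment bound $\E_t\|\xi_t\|^2 = O(n l^2 V_t/\sigma_t^2 + 1)$ from Lemma~\ref{lem:234_moments}, to obtain $\alpha_t^2\,\E_t\|\nabla\tilde c_t-\xi_t\|^2 \le C\beta_t(1+V_t)$, where $\beta_t := \alpha_t^2/\sigma_t^2$ and $C$ depends only on $n$ and $l$. Since $\beta_t = O(t^{-(2a-2b)})$ with $2a-2b>1$, the series $\sum_t \beta_t$ converges, which is the crucial summability.

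Next I would split on whether $V_t$ exceeds $\tilde K$. On $\{V_t>\tilde K\}$, Lemma~\ref{lem:mixedstr_cost}(3) gives $(\bmu_t,\nabla\tilde c_t(\bmu_t))>0$; dropping this non-positive drift produces the clean near-supermartingale inequality
\[
\E_t V_{t+1} \;\le\; (1+C\beta_t)V_t + C\beta_t \qquad \text{on } \{V_t>\tilde K\}.
\]
On $\{V_t\le\tilde K\}$, $V_t$ is already bounded by $\tilde K$ and $|(\bmu_t,\nabla\tilde c_t)|\le l\sqrt{\tilde K}$, so $\E_t V_{t+1}\le \tilde K + 2l\sqrt{\tilde K}\,\alpha_t + C\beta_t(1+\tilde K)$. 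I would then conclude by an excursion-based Robbins--Siegmund argument: each time the process enters $\{V\le\tilde K\}$ it leaves with a value at most $\tilde K + O(\alpha_t+\beta_t)$, and during any excursion above $\tilde K$ the process obeys the above near-supermartingale bound with purely summable additive noise $C\beta_t$, so by the Robbins--Siegmund theorem its running supremum is a.s.\ finite and uniformly controlled by a constant depending only on $\sum_t \beta_t$. Combining this with the trivial bound inside $\{V\le\tilde K\}$ gives a single deterministic $M$ with $\sup_t V_t\le M$ a.s.

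The main obstacle is that a naive global application of Robbins--Siegmund fails, because the full additive-error piece $2\alpha_t l\sqrt{\tilde K}\,\mathds{1}_{\{V_t\le\tilde K\}}$ has a non-summable envelope since $\sum_t \alpha_t = \infty$ for $a<1$. The key technical point is therefore to exploit that inside the compact region $\{V_t\le \tilde K\}$ boundedness is trivial, so the supermartingale machinery only needs to be run on excursions above $\tilde K$, where the additive noise reduces to the summable $C\beta_t$. Uniformity of the constant $M$ with respect to $\{c_1,c_2,\ldots\}$ follows because $\tilde K$, $l$, and $C$ depend only on the class parameters of Assumptions~\ref{assum:Lipsch}--\ref{assum:inftybeh1}, not on any specific sequence of cost functions.
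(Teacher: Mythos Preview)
Your overall strategy—a Robbins--Siegmund near-supermartingale on $V_t=\|\bmu_t\|^2$ together with the drift condition from Lemma~\ref{lem:mixedstr_cost}(3)—is sound in spirit, and the one-step recursion you derive, $\E_t V_{t+1}=V_t-2\alpha_t(\bmu_t,\nabla\tilde c_t(\bmu_t))+\alpha_t^2\E_t\|\nabla\tilde c_t-\xi_t\|^2$, together with the bound $\alpha_t^2\E_t\|\xi_t\|^2\le C\beta_t(1+V_t)$, is correct. The gap is in the excursion step. Your claim that ``each time the process enters $\{V\le\tilde K\}$ it leaves with a value at most $\tilde K+O(\alpha_t+\beta_t)$'' holds only \emph{in conditional expectation}; pathwise the one-step increment involves $\alpha_t\|\xi_t\|$, and $\xi_t$ has unbounded support (it is built from a Gaussian perturbation), so the entry value of an excursion is not deterministically controlled. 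Likewise, Robbins--Siegmund applied to a single excursion gives only that the running supremum is almost surely \emph{finite}—a random bound depending on the (random) entry value—not a deterministic constant depending only on $\sum_t\beta_t$. Since there may be infinitely many excursions with random entry values, stitching these together into a single deterministic $M$ requires an additional argument (e.g.\ a Borel--Cantelli control on the one-step overshoots) that you have not supplied.

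The paper sidesteps this difficulty by a different choice of Lyapunov function: instead of $\|\bmu\|^2$ it takes $V(\bmu)=W(\|\bmu\|^2)$ with $W(x)=(x-\tilde K)_+^2$, so that $W'(x)=0$ on $\{x<\tilde K\}$. This makes the drift term $-2\alpha_t(\nabla\tilde c_t(\bmu),\bmu)W'(\|\bmu\|^2)$ globally nonpositive without any case split, and the resulting inequality $\EuScript L V(\bmu)\le -\alpha_t\psi(\bmu)+g(t)(1+V(\bmu))$ with $\sum_t g(t)<\infty$ feeds directly into the Nevel'son--Has'minskii theorem (Theorem~2.7.1 in \cite{NH}). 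The price of this cleaner global structure is that $W$ is quadratic rather than linear in $\|\bmu\|^2$, so the second-order remainder $(\|\bmu_{t+1}\|^2-\|\bmu_t\|^2)^2$ must be controlled; this is precisely why the paper needs the third and fourth moment bounds on $\xi_t$ from Lemma~\ref{lem:234_moments}, whereas your approach would need only the second moment. If you want to keep the simpler quadratic Lyapunov function, you must close the excursion argument rigorously; otherwise the paper's quartic choice is the more direct route.
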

\begin{proof}
First, we notice that the conditions on the sequences $\{\alpha_t\}$, $\{\sigma_t\}$ imply that
\begin{align}\label{eq:seq}
 \sum_{t=1}^{\infty}\alpha_t=\infty, \quad  \sum_{t=1}^{\infty}\frac{\alpha_t^2}{\sigma_t^2}<\infty.
\end{align}
Let us consider the function $V(\bmu) = W(\|\bmu\|^2)$, where $W:\R\to\R$ is defined as follows:
\begin{align}\label{eq:funV}
W(x) = \begin{cases}
        0, &\mbox{ if } x<\tilde K,\\
        (x-\tilde K)^2, &\mbox{ if } x\ge \tilde K,
       \end{cases}
\end{align}
and $\tilde K$ is the constant from Lemma\r\ref{lem:mixedstr_cost}. Let $\EuScript L$ denote the generating operator of the Markov process $\{\bmu_t\}$. Recall that $\EuScript LV (\bmu) = \E\{V(\bmu_{t+1})|\bmu_t=\bmu\} - V(\bmu)$. Our goal is to apply a result on boundedness of the discrete-time Markov processes, based on the generating operator properties applied to the function $V$. This result is provided in \cite{NH}, Theorem\r2.5.2. (For the ease of  reviewers, we  provided the statement of this theorem in Appendix A).

The function $W$ fulfills the following property
\begin{align}
 W(y)-W(x)\le W'(x)(y-x) + (y-x)^2,
\end{align}
Thus, taking this inequality into account, we obtain
\begin{align}
 \label{eq:propW}
 V(\bmu_{t+1}) - V(\bmu_t)\le &W'(\|\bmu_t\|^2)(\|\bmu_{t+1}\|^2 - \|\bmu_t\|^2) + (\|\bmu_{t+1}\|^2 - \|\bmu_t\|^2)^2.
\end{align}

According to \eqref{eq:alg1} the  norm of $\bmu_t$ evolves as
\begin{align}\label{eq:firstest}
 \|\bmu_{t+1}\|^2 = &\|\bmu_t - \alpha_t\nabla \tilde c_t(\bmu_t) +\alpha_t \xi_t(\xb_t,\bmu_t, \sigma_t)\|^2\cr
 =&\|\bmu_t \|^2 + \alpha^2_t(\|\nabla \tilde c_t(\bmu_t)\|^2 + \|\xi_t(\xb_t,\bmu_t, \sigma_t)\|^2) \cr
 &+2\alpha_t(-\nabla \tilde c_t(\bmu_t)+ \xi_t(\xb_t,\bmu_t, \sigma_t),\bmu_t )-2\alpha_t^2(\nabla \tilde c_t(\bmu_t), \xi_t(\xb_t,\bmu_t, \sigma_t)).
\end{align}
Hence, taking into account \eqref{eq:martdiff}, we obtain
\begin{align}
 \E_{t}\|\bmu_{t+1}\|^2 =& \|\bmu_t \|^2+ \alpha^2_t(\|\nabla  \tilde c_t(\bmu_t)\|^2 + \E_{t}\|\xi_t(\xb_t,\bmu_t, \sigma_t)\|^2) \cr
 &-2\alpha_t(\nabla  \tilde c_t(\bmu_t),\bmu_t )
\label{eq:est1}
\end{align}

We proceed with estimation of the term $\E_{t}\|\xi_t(\xb_t,\bmu_t, \sigma_t)\|^2$.
Due to Lemma\r\ref{lem:234_moments} for some quadratic function of $\sigma_t$ and $\mu^i_t$, $i\in[n]$, denoted by $f_1(\bmu_t, \sigma_t)$, we have
 \begin{align}\label{eq:Rineq}
  \E_{t}\{\|\xi_t(\xb_t,\bmu_t,\sigma_t)\|^2\}\le \frac{f_1(\bmu_t, \sigma_t)}{\sigma_t^2}.
 \end{align}

From Assumptions\r\ref{assum:Lipsch} and\r\ref{assum:inftybeh1}, \eqref{eq:propW}-\eqref{eq:Rineq}, and   {the fact that $\E\{\cdot|\bmu_t=\bmu\} =\E\{\E_{t}\{\cdot\}|\bmu_t=\bmu\}$}
  \begin{align}
  \nonumber
    \label{eq:part1}
  &\EuScript LV (\bmu) = \E\{V(\bmu_{t+1})|\bmu_t=\bmu\} - V(\bmu)\\
  &\le(\E\{\E_{t}\{\|\bmu_{t+1}\|^2\}|\bmu_t=\bmu\} - \|\bmu\|^2)W'(\|\bmu\|^2) + \E\{\E_{t}\{(\|\bmu_{t+1}\|^2 - \|\bmu_t\|^2)^2\} |\bmu_t=\bmu\}\cr
  &\le -2\alpha_t(\nabla \tilde c_t(\bmu),\bmu)W'(\|\bmu\|^2)\cr
  &\quad + W'(\|\bmu\|^2) \alpha^2_t(\|\nabla \tilde c_t(\bmu)\|^2 + \E\{\E_{t}\{\|\xi_t(\xb_t,\bmu_t, \sigma_t)\|^2\}|\bmu_t=\bmu\})\cr
  &\qquad+\E\{\E_{t}\{(\|\bmu_{t+1}\|^2 - \|\bmu_t\|^2)^2\}|\bmu_t=\bmu\}\cr
  &\le-2\alpha_t(\nabla \tilde c_t(\bmu),\bmu)W'(\|\bmu\|^2)+ g_1(t)(1+V(\bmu)) \cr
  &\qquad+\E\{\E_{t}\{(\|\bmu_{t+1}\|^2 - \|\bmu_t\|^2)^2\}|\bmu_t=\bmu\},
 \end{align}
  { where
\begin{align*}
  g_1(t) &= \frac{W'(\|\bmu\|^2)\alpha^2_t(\|\nabla \tilde c_t(\bmu)\|^2 + \E_{t}\|\xi_t(\xb_t,\bmu, \sigma_t)\|^2)}{1+V(\bmu)}\cr
   & =\alpha_t^2 \frac{W'(\|\bmu\|^2)\|\nabla \tilde c_t(\bmu)\|^2}{1+V(\bmu)} +  \frac{\alpha_t^2}{\sigma_t^2} \frac{f_1(\bmu, \sigma_t)}{1+V(\bmu)}.
\end{align*}
From the definition of $W$, $V$, and $f_1$ and the fact that $\nabla \tilde c_t(\bmu)$ is bounded by Lemma~\ref{lem:mixedstr_cost} Part 1, we conclude that $g_1(t) = O\left(\frac{\alpha_t^2}{\sigma_t^2}\right)$}. Thus, according to the condition in \eqref{eq:seq}, $\sum_{t=1}^{\infty}g_1(t)<\infty$.
Finally, we estimate the term $\E\{\E_{t}\{(\|\bmu_{t+1}\|^2 - \|\bmu_t\|^2)^2\}|\bmu_t=\bmu\}$. According to \eqref{eq:firstest}
 \begin{align*}
 &(\|\bmu_{t+1}\|^2 - \|\bmu_t\|^2)^2= [\alpha^2_t(\|\nabla \tilde c_t(\bmu_t)\|^2  + \|\xi_t(\xb_t,\bmu_t, \sigma_t)\|^2) \cr
 &\qquad+2\alpha_t(-\nabla \tilde c_t(\bmu_t) + \xi_t(\xb_t,\bmu_t, \sigma_t),\bmu_t )-2\alpha_t^2(\nabla  \tilde c_t(\bmu_t), \xi_t(\xb_t,\bmu_t, \sigma_t))]^2.
 \end{align*}
   {Hence,
\begin{align}
\label{eq:secondterm}
&(\|\bmu_{t+1}\|^2 - \|\bmu_t\|^2)^2\le[\alpha^2_t(\|\nabla \tilde c_t(\bmu_t)\|^2  + \|\xi_t(\xb_t,\bmu_t, \sigma_t)\|^2) \\
\nonumber
 &+2\alpha_t\|\bmu_t\|(\|\nabla \tilde c_t(\bmu_t)\|+ \|\xi_t(\xb_t,\bmu_t, \sigma_t)\|)+2\alpha_t^2\|\nabla \tilde c_t(\bmu_t)\|\| \xi_t(\xb_t,\bmu_t, \sigma_t)\|]^2\\
 \nonumber
 &= [\alpha^2_t \|\xi_t(\xb_t,\bmu_t, \sigma_t)\|^2 + O(\alpha_t)\|\xi_t(\xb_t,\bmu_t, \sigma_t)\|(\|\bmu_t\|+1)+ O(\alpha_t)(\|\bmu_t\|+1)]^2\\
 \nonumber
 & = \alpha^4_t \|\xi_t(\xb_t,\bmu_t, \sigma_t)\|^4 +  O(\alpha^2_t)(\|\xi_t(\xb_t,\bmu_t, \sigma_t)\|^2+1)(\|\bmu_t\|+1)^2\\
 \nonumber
 &+  O(\alpha^3_t)(\|\xi_t(\xb_t,\bmu_t, \sigma_t)\|^2+\|\xi_t(\xb_t,\bmu_t, \sigma_t)\|^3)(\|\bmu_t\|+1),
 \end{align}
 where in the first equality above, we used Lemma~\ref{lem:mixedstr_cost} Part 2 on boundedness of $\tilde c_t(\bmu_t)$.
Taking conditional expectation $\E_{t}\{\cdot\}$ of the both sides, we see that we need to bound $\E_{t}\|\xi_t(\xb_t,\bmu_t, \sigma_t)\|^3$ and $\E_{t}\|\xi_t(\xb_t,\bmu_t, \sigma_t)\|^4$. Using  Lemma\r\ref{lem:234_moments} we obtain
  \begin{align}\label{eq:Rineq1}
  \E_{t}\{\|\xi_t(\xb_t,\bmu_t,\sigma_t)\|^3\}\le \frac{f_2(\bmu_t, \sigma_t)}{\sigma_t^3},\\
\label{eq:Rineq2}
  \E_{t}\{\|\xi_t(\xb_t,\bmu_t,\sigma_t)\|^4\}\le \frac{f_3(\bmu_t, \sigma_t)}{\sigma_t^4},
 \end{align}
 where $f_2(\bmu_t, \sigma_t)$ and $f_3(\bmu_t, \sigma_t)$ are third and fourth order polynomials of $\sigma_t$ and $\mu^i_t$, $i\in[n]$, respectively.
 Hence,  from Lemma~\ref{lem:234_moments} and definition of the function $V(\bmu) = O(\|\bmu\|^4)$ we get for large enough $\bmu$
\begin{align}
\nonumber
\E_{t}\{(\|\bmu_{t+1}\|^2 - \|\bmu_t\|^2)^2|\bmu_t = \bmu\} =  & O\left(\frac{\alpha^4_t}{\sigma_t^4}\|\bmu\|^4\right) + O\left(\frac{\alpha^2_t}{\sigma_t^2}(1+\|\bmu\|^2)\right)\\
\label{eq:part2}
&+O\left(\frac{\alpha^3_t}{\sigma_t^3}(\|\bmu\|^4+1)\right) = g_2(t)(1+V(\bmu)),
\end{align}
 where $g_2(t) = O\left(\frac{\alpha_t^2}{\sigma_t^2}\right)$}. Thus, due to condition for step-sizes and variances,
it follows from  \eqref{eq:seq} that $\sum_{t=1}^{\infty}g_2(t)<\infty$. Thus, from \eqref{eq:part1} and \eqref{eq:part2} we obtain
 \begin{align}
 \label{eq:V_onepoint}
 \EuScript LV (\bmu)\le &-2\alpha_t(\nabla \tilde c_t(\bmu),\bmu)W'(\|\bmu\|^2) + (g_1(t)+g_2(t))(1+V(\bmu)).
 \end{align}
  {Due to Lemma 2 and definition of the function $W$ (see \eqref{eq:funV}), $(\nabla \tilde c_t(\bmu),\bmu)W'(\|\bmu\|^2) \ge  0$ for any $t = 1, 2, \ldots,$ $\bmu \in\R^n$ and $(\nabla \tilde c_t(\bmu),\bmu)W'(\|\bmu\|^2) > 0$ for any $t = 1, 2, \ldots,$
and $\bmu$ such that $\|\bmu\|> ˜K$ . Moreover, according to \eqref{eq:seq}, $\sum_{t=1}^{\infty} \alpha_t = \infty$.  Thus,
Theorem~2.7.1 from [13] (see Appendix~\ref{app:support}) implies that $\lim_{t\to\infty}\mbox{dist}(\bmu_t,B_{\tilde{K}})=0$ almost surely, where
$B_{\tilde{K}}=\{\bmu: \|\bmu\|\le \tilde{K}\}$\footnote{Here $\mbox{dist}(\bmu_t,B_{\tilde{K}})= \inf_{\boldsymbol y\in B_{\tilde{K}}} \|\bmu_t-\boldsymbol y\|$ is the distance between $\bmu_t$ and the ball $B_{\tilde{K}}$.}. As $\tilde{K}˜$ is independent on $t$ and the sequence of the cost
functions, we conclude existence of $M$ such that $\|\bmu_t\|$ is bounded almost surely by $M$, uniformly with respect to time $t$ and the sequences of cost functions $\{c_1, c_2, \ldots\}$. }
\end{proof}

With this lemma in place, we can prove the main result.

\begin{proof}[Proof of Theorem\r\ref{th:main}]
The plan of the proof is as follows.    {Let us denote the regret calculated with respect to the smoothed cost functions $\tilde c_t(\bmu)$  along the sequence $\bmu_t$ by $\tilde R^{\bmu_t}_{\bx}(T)$, namely, $\tilde R^{\bmu_t}_{\bx}(T) = \sum_{t=1}^T \tilde c_t (\bmu_t ) - \sum_{t=1}^T \tilde c_t (\bx)$}. We follow the idea of the proof in \cite{zink} and upperbound  $\tilde R^{\bmu_t}_{\bx}(T)$ by
 $\sum_{t=1}^T(\nabla \tilde c_t(\bmu_t),\bmu_t-\bx)$,
 where $\bx$ is any point from $\R^n$ such that $\|\bx\|$ is bounded. Then, using Lemma~\ref{lem:mixedstr_cost}, we relate back to the regret of the original cost functions evaluated at the mean vector $\bmu_t$,   {$R^{\bmu_t}_{\bx}(T)= \sum_{t=1}^T  c_t(\bmu_t) - \sum_{t=1}^T c_t(\bx)$}. Finally, taking conditional expectation of $R^{\bmu_t}_{\bx}$ and using Lemma~\ref{lem:mixedstr_cost}  we  get the bound on expected regret for the queried points, that is, $R_{\bx}(T) = \sum_{t=1}^T c_t (\xb_t ) - \sum_{t=1}^T c_t (\bx )$ as defined per \eqref{eq:regret}.

By  convexity of $\tilde c_t$ as shown in Lemma~\ref{lem:mixedstr_cost}, for any $\{\bmu_t\}, \bx \in \R^n$
 \begin{align}\label{eq:reg1}
  \sum_{t=1}^T &\tilde c_t(\bmu_t) - \sum_{t=1}^T \tilde c_t(\bx)\leq \sum_{t=1}^T(\nabla \tilde c_t(\bmu_t),\bmu_t-\bx).
   \end{align}
Thus, the regret calculated for the functions $\tilde c_t(\bmu)$ is at least as much as the regret calculated for the function $(\nabla \tilde c_t(\bmu_t), \bmu)$. To bound this term, analogously to equality \eqref{eq:firstest}, but evaluating $\| \bmu_{t+1} -\bx\| $ instead of $\| \bmu_{t+1}\|$  we can write
  {\begin{align}
\nonumber
&(\nabla \tilde c_t(\bmu_t), \bmu_t-\bx)=\frac{1}{2\alpha_t}(\|\bmu_t-\bx\|^2-\|\bmu_{t+1}-\bx\|^2)\\
 \label{eq:firstest1}
&\quad+ \frac{\alpha_t}{2}(\|\nabla \tilde c_t(\bmu_t)\|^2 + \|\xi_t(\xb_t,\bmu_t, \sigma_t)\|^2) \\
\nonumber
&\quad-(\xi_t(\xb_t,\bmu_t, \sigma_t),\bmu_t -\bx)-\alpha_t(\nabla \tilde  c_t(\bmu_t), \xi_t(\xb_t,\bmu_t, \sigma_t)).
\end{align}
By taking conditional expectation $\E_{t}$ of the both sides in the  equality \eqref{eq:firstest1}, we get that almost surely
\begin{align}
\nonumber
&(\nabla \tilde c_t(\bmu_t), \bmu_t-\bx)=\frac{1}{2\alpha_t}(\|\bmu_t-\bx\|^2-\E_{t}\{\|\bmu_{t+1}-\bx\|^2\})\\
 \label{eq:firstest1_1}
&\quad+ \frac{\alpha_t}{2}(\|\nabla \tilde c_t(\bmu_t)\|^2 + \E_{t}\{\|\xi_t(\xb_t,\bmu_t, \sigma_t)\|^2\}),
\end{align}
where we used the fact that $\E_{t}\xi_t(\xb_t,\bmu_t, \sigma_t)=0$ for all $t$, which is implied by \eqref{eq:martdiff}, and the following  property of the conditional expectation: $\E_{t}\{f(\bmu_{t})\}=\E\{f(\bmu_{t})|\EuScript F_{t}\}=f(\bmu_{t})$ almost surely for any $t$ and any continuous function $f$.
By summing up the equality \eqref{eq:firstest1_1} over $t=1, \dots, T$ we conclude that almost surely
\begin{align*}
\sum_{t=1}^{T}(\nabla \tilde c_t(\bmu_t), \bmu_t-\bx)=\sum_{t=1}^{T}&\frac{1}{2\alpha_t}(\|\bmu_t-\bx\|^2-\E_{t}\{\|\bmu_{t+1}-\bx\|^2\})\\
\nonumber
&+\sum_{t=1}^{T}\frac{\alpha_t}{2}(\E_{t}\{\|\xi_t(\xb_t,\bmu_t, \sigma_t)\|^2\} + \|\nabla \tilde c_t(\bmu_t)\|^2).
\end{align*}
}

  {Then taking the full expectation of  both sides, we obtain that $\forall T>0$,
\begin{align}
\nonumber
\E\sum_{t=1}^{T}(\nabla \tilde c_t(\bmu_t), \bmu_t-\bx)=\sum_{t=1}^{T}&\frac{1}{2\alpha_t}(\E\|\bmu_t-\bx\|^2-\E\|\bmu_{t+1}-\bx\|^2)\\
\label{eq:condexp}
&+\E\left(\sum_{t=1}^{T}\frac{\alpha_t}{2}(\E_{t}\{\|\xi_t(\xb_t,\bmu_t, \sigma_t)\|^2\} + \|\nabla \tilde c_t(\bmu_t)\|^2)\right).
\end{align}
Above, we used the tower rule for the conditional expectation, namely $$\E\{\E_{t}\{\|\bmu_{t+1}-\bx\|^2\}\}=\E\{\E\{\|\bmu_{t+1}-\bx\|^2|\EuScript F_t\}\} = \E\{ \|\bmu_{t+1}-\bx\|^2\}\;\quad \forall t.$$}   {
Furthermore, according to Lemma\r\ref{lem:lem1} and Assumption\r\ref{assum:inftybeh1} (Remark\r\ref{rem:Assum_inf}), there exists $M$ such that $\|\bmu_t\|\le M$ almost surely for all $t$. By taking into account Lemmas\r\ref{lem:mixedstr_cost},\r\ref{lem:234_moments} and inequality\r\eqref{eq:dim_depend}, we conclude that almost surely the term $\|\nabla \tilde c_t(\bmu_t)\|$ is bounded and $\E_{t}\{\|\xi_t(\xb_t,\bmu_t, \sigma_t)\|^2\}= O\left(\frac{nl^2\|\bmu_t\|^2}{\sigma_t^2} + 1\right)$. Hence,
\begin{align}
\nonumber
 &\E\sum_{t=1}^{T}(\nabla \tilde c_t(\bmu_t), \bmu_t-\bx)\le \frac{1}{2\alpha_1}\E\|\bmu_1-\bx\|^2 +\frac{1}{2}\sum_{t=2}^{T}\left(\frac{1}{\alpha_t}-\frac{1}{\alpha_{t-1}}\right)\E\|\bmu_t-\bx\|^2\\
\nonumber
&+\E\left(\sum_{t=1}^{T}\frac{\alpha_t}{2}(\E_{t}\{\|\xi_t(\xb_t,\bmu_t, \sigma_t)\|^2\} + \|\nabla \tilde c_t(\bmu_t)\|^2)\right)\\
\label{eq:condexp1}
&  {= 2(M^2+\|\bx\|^2)\frac{1}{2\alpha_T}  + nl^2 O\left(\sum_{t=1}^T\frac{\alpha_t}{\sigma_t^2}\right),}
\end{align}
where in the last inequality we used $\|\bmu_t-\bx\|^2\le 2(\|\bmu_t\|^2 + \|\bx\|^2)\le 2(M^2+\|\bx\|^2)$. Next, taking into account the settings for $\alpha_t$ and $\sigma_t$, we get
\begin{align*}
\sum_{t=1}^T\frac{\alpha_t}{\sigma_t^2}=\sum_{t=1}^T\frac{1}{(nt)^{a-2b}}& \le\frac{1}{n^{a-2b}}\left( 1 + \int_{1}^T\frac{dt}{t^{a-2b}}\right)\cr
&=\frac{1}{n^{a-2b}}\left(\frac{T^{1-a+2b}}{1-a+2b}-\frac{a-2b}{1-a+2b}\right),
\end{align*}
Thus,
\begin{align*}
 \E\sum_{t=1}^{T}(\nabla \tilde c_t(\bmu_t), \bmu_t-\bx)\le&(M^2+\|\bx\|^2)O\left({n^aT^a}\right) + l^2O\left((nT)^{1-a+2b}+n^{1-a+2b}c\right),
\end{align*}
where $c=-\frac{a-2b}{1-a+2b}$.
Hence, from \eqref{eq:reg1},
\begin{align}
\label{eq:reg_rate}
\E\left\{\frac{\tilde R^{\bmu_t}_{\bx}(T)}{T}\right\}= (M^2+\|\bx\|^2)O\left(\frac{n^a}{T^{1-a}}\right) + l^2O\left(\frac{n^{1-a+2b}}{T^{a-2b}}\right).
\end{align}
Next, according to Lemma\r\ref{lem:mixedstr_cost}, Part 2,
\begin{align}
\label{eq:reg_connect}
R^{\bmu_t}_{\bx}(T)& = \sum_{t=1}^T  c_t(\bmu_t) - \sum_{t=1}^T c_t(\bx)\\
\nonumber
& \le \sum_{t=1}^T(\tilde c_t(\bmu_t) + \frac{nL\sigma^2_t}{2})- \sum_{t=1}^T \tilde c_t(\bx)\\
\nonumber
& = \tilde R^{\bmu_t}_{\bx}(T) + \sum_{t=1}^T \frac{nL\sigma^2_t}{2}.
\end{align}
Hence, accounting for the inequality $\sum_{t=1}^T\sigma^2_t \le \frac{1}{n^{2b}}\left(\frac{T^{1-2b}}{1-2b}-\frac{2b}{1-2b}\right)$, we conclude that
\begin{align*}
\E\left\{\frac{R^{\bmu_t}_{\bx}(T)}{T}\right\}= (M^2+\|\bx\|^2)O\left(\frac{n^a}{T^{1-a}}\right) + l^2O\left(\frac{n^{1-a+2b}}{T^{a-2b}}\right)+ LO\left(\frac{n^{1-2b}}{T^{2b}}\right).
\end{align*}
As $0<a<1$, $b>0$, $a-2b>0$, the inequality above implies  $\varlimsup_{T\to\infty}\E \left\{\frac{R^{\bmu_t}_{\bx}(T)}{T}\right\}\le 0$ almost surely.
Notice that $R_{\bx}(T) = R^{\bmu_t}_{\bx}(T) +\sum_{t=1}^T (c_t (\xb_t ) - c_t (\bmu_t ))$.
Then, by taking expectation conditioned on $\EuScript F_T$, we get that almost surely
\begin{align*}
\E\{R_{\bx}(T)|\EuScript F_T\} &= R^{\bmu_t}_{\bx}(T) +\sum_{t=1}^T  \E\{(c_t (\xb_t ) - c_t (\bmu_t ))|\EuScript F_T\}\cr
& = R^{\bmu_t}_{\bx}(T) +\sum_{t=1}^T  (\tilde c_t (\bmu_t ) - c_t (\bmu_t ))\cr
&\le R^{\bmu_t}_{\bx}(T) + \frac{nL}{2} \sum_{t=1}^T  \sigma^2_t.
\end{align*}
In the inequality above we used Part 2 of Lemma\r\ref{lem:mixedstr_cost}.
Now by taking the full expectation and using the inequality $\sum_{t=1}^T\sigma^2_t \le \frac{1}{n^{2b}}\left(\frac{T^{1-2b}}{1-2b}-\frac{2b}{1-2b}\right)$, we conclude that
\begin{align}\label{eq:reg_rate_x}
  \E\left\{\frac{R_{\bx}(T)}{T}\right\} =(M^2+\|\bx\|^2)O\left(\frac{n^a}{T^{1-a}}\right) + l^2O\left(\frac{n^{1-a+2b}}{T^{a-2b}}\right)+ LO\left(\frac{n^{1-2b}}{T^{2b}}\right)
\end{align}
and, since the inequalities $2a-2b>1$ and $a<1$ imply $a-2b>0$, we get $\varlimsup_{T\to\infty} \E \left\{\frac{R_{\bx}(T)}{T}\right\}\le 0$ as desired. The result $$\E\left\{\frac{R_{\bx}(T)}{T}\right\} = (M^2+\|\bx\|^2+L+l^2)O\left(\frac{n^{\frac{2}{3}}}{T^{\frac{1}{3}}}\right)$$
follows from optimizing the rate in \eqref{eq:reg_rate_x} with respect to $a, b$ subject to the constraints $ 0 < a < 1$, $ 2a -2b > 1$, $0 < b < 1$.
}
\end{proof}

\begin{rem}\label{rem:expect}
Note that Lemma~\ref{lem:lem1} provides the result on the almost surely bounded iterates. However, in the proof of Theorem~\ref{th:main} we needed the weaker condition of the bounded expectation of $\|\bmu_t\|^2$ (see \eqref{eq:condexp1}).
\end{rem}

Let us further provide insights on the assumptions and their use in the proof above. Assumption\r\ref{assum:Lipsch}  on the uniform bound of gradients is used to show that through scaling the measured payoffs (zeroth order feedback) we obtain an estimate of gradient of the smoothed version of the cost in Lemma\r\ref{lem:zeta_martingale}. This assumption is also used to ensure the variance and higher order moments of $\xi_t$, perturbation of gradients in equation \eqref{eq:alg1}, are bounded (Lemma\r\ref{lem:234_moments}), and consequently to prove boundedness of the iterates (Lemma\r\ref{lem:lem1}). Assumption\r\ref{assum:Lipsch_grad} on uniformly Lipschitz gradients is used to bound the difference between the smoothed and the original cost functions through Lemma\r\ref{lem:mixedstr_cost} Part\r2. This enables us to bound the regret by first computing it along the smoothed version of the cost in the proof of Theorem\r\ref{th:main}.
  {Assumption\r\ref{assum:inftybeh1}  implies existence of a ball with  center at the origin and  radius $\tilde K$ whose complement contains no minima of the cost functions. The parameter $\tilde K$ defines, in its turn, a finite upper bound for the procedure's iterates (see the proof of Lemma~\ref{lem:lem1}). Thus, this assumption rules out possibility of infinite regret.}

{
\section{Two-point Feedback}\label{sec:twopoint}
Let us assume that given the process \eqref{eq:alg}, we can obtain the value of the function $c_t$ not only in the current state $\xb_t$, but also at the current mean value $\bmu_t$. The regret as defined in \eqref{eq:regret} is still in terms of the incurred costs $c_t(\xb_t)$ in comparison to any static choice $\xb$ with bounded norm.
In this section we modify the procedure \eqref{eq:alg}  in such a way that the upper bound for the regret achieves its optimum over the time and dimension parameters, namely is $O(\sqrt n\sqrt{T})$.
\subsection{Unconstrained Optimization}\label{subsec:two_unconstr}
We modify the process \eqref{eq:alg} as follows: We start with an arbitrary $\bmu_0$. Then, for $t=0,1,2,\ldots$
\begin{align}\label{eq:alg2P}
\xb_t &\sim\EuScript N(\bmu_t,\sigma_t),\\
\nonumber
 \bmu_{t+1} &= \bmu_t - \alpha_t (\hat c_t - c_t(\bmu_t)) \frac{\xb_t - \bmu_t}{\sigma^2_t},
\end{align}
where, as before, $\hat c_t = c_t(\xb_t)$.
The procedure above can be rewritten as
\begin{align}
\label{eq:alg1_2P}
 \bmu_{t+1} &= \bmu_t - \alpha_t\nabla\tilde c_t(\bmu_t)+\alpha_t \zeta_t(\xb_t,\bmu_t, \sigma_t),\\
\zeta_t(\xb_t,\bmu_t, \sigma_t) &= \nabla\tilde c_t(\bmu_t)-(\hat c_t - c_t(\bmu_t))\frac{\xb_t - \bmu_t}{\sigma^2_t}.
\end{align}
Analogously to Lemma\r\ref{lem:zeta_martingale}, we can formulate the following result.
\begin{lem}\label{lem:zeta_martingale2P}
Under Assumption\r\ref{assum:Lipsch},
\begin{align}\label{eq:martdiff2P}
\E_{t}\zeta_t(\xb_t,\bmu_t, \sigma_t) = \nabla\tilde c_t(\bmu_t)-\E_{t}\{\hat c_t\frac{\xb_t - \bmu_t}{\sigma^2_t}\} = 0.
\end{align}
\end{lem}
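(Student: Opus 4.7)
The plan is to establish both equalities in the lemma by reducing to Lemma~\ref{lem:zeta_martingale} and exploiting a baseline-subtraction argument familiar from variance reduction in gradient estimation. The core observation is that subtracting $c_t(\bmu_t)$ from $\hat c_t$ inside the one-point estimator changes neither the bias (conditional mean) of the estimator nor its status as an unbiased proxy for $\nabla \tilde c_t(\bmu_t)$; it only serves to reduce variance, which will matter later for the two-point regret analysis but is irrelevant for this particular statement.

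First, I would expand $\E_{t}\zeta_t$ using the definition of $\zeta_t$ in \eqref{eq:alg1_2P}, giving
\begin{align*}
\E_{t}\zeta_t(\xb_t,\bmu_t,\sigma_t) = \nabla \tilde c_t(\bmu_t) - \E_{t}\!\left\{\hat c_t\frac{\xb_t-\bmu_t}{\sigma_t^2}\right\} + \E_{t}\!\left\{c_t(\bmu_t)\frac{\xb_t-\bmu_t}{\sigma_t^2}\right\}.
\end{align*}
The third term vanishes because $c_t(\bmu_t)$ is $\EuScript F_t$-measurable (since $\bmu_t$ is determined by the history up to time $t$), so it factors out of the conditional expectation, while $\E_{t}\{\xb_t-\bmu_t\}=\boldsymbol 0$ by the defining property of the Gaussian sampling $\xb_t\sim \EuScript N(\bmu_t,\sigma_t)$. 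This establishes the first equality in the lemma statement.

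For the second equality, I would directly invoke the already-proved Lemma~\ref{lem:zeta_martingale}, whose identity \eqref{eq:martdiff} asserts precisely that
\begin{align*}
\nabla \tilde c_t(\bmu_t) - \E_{t}\!\left\{\hat c_t\frac{\xb_t-\bmu_t}{\sigma_t^2}\right\} = 0
\end{align*}
under Assumption~\ref{assum:Lipsch}. Combining the two steps yields $\E_{t}\zeta_t=\boldsymbol 0$. Since both ingredients are routine, I do not anticipate any real obstacle; the only point that warrants a sentence of justification is the measurability of $c_t(\bmu_t)$ with respect to $\EuScript F_t=\sigma(\{\bmu_k\}_{k\le t})$, which is immediate from the definition of the filtration and the fact that $c_t$ is a fixed (deterministic) function at stage $t$ in the oblivious setting.
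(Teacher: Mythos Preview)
Your proof is correct and follows essentially the same approach as the paper: show that the baseline term $\E_{t}\{c_t(\bmu_t)(\xb_t-\bmu_t)/\sigma_t^2\}$ vanishes by measurability of $c_t(\bmu_t)$ and the zero mean of $\xb_t-\bmu_t$, then reduce the remaining identity to Lemma~\ref{lem:zeta_martingale}. The only cosmetic difference is that the paper says ``repeat the proof of Lemma~\ref{lem:zeta_martingale}'' for the second equality, whereas you invoke its statement directly---your version is arguably cleaner.
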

\begin{proof}
The first equality in \eqref{eq:martdiff2P} holds, due to the fact that
\begin{align*}
\E_{t}\zeta_t(\xb_t,\bmu_t, \sigma_t) &= \nabla\tilde c_t(\bmu_t)\ - E_{\xb_t}\{(\hat c_t-c_t(\bmu_t))\frac{\xb_t - \bmu_t}{\sigma^2_t}\},\\
\E_{t}\{c_t(\bmu_t)\frac{\xb_t - \bmu_t}{\sigma^2_t}\} &= 0.
\end{align*}
To get the second equality in \eqref{eq:martdiff2P}, we can repeat the proof of Lemma\r\ref{lem:zeta_martingale}.
\end{proof}

Further, we  can notice that Lemma\r\ref{lem:234_moments} can be reformulated in terms of the new stochastic term $\zeta_t(\xb_t,\bmu_t, \sigma_t)$ as follows.
  {\begin{lem}\label{lem:234_moments2P} Under Assumption~\ref{assum:Lipsch} the following estimations hold: 
 \begin{align*}
  \E_{t}\{\|\zeta_t(\xb_t,\bmu_t,\sigma_t)\|^2\}=O(l^2), \\
  \E_{t}\{\|\zeta_t(\xb_t,\bmu_t,\sigma_t)\|^3\}=O(l^3),\\
  \E_{t}\{\|\zeta_t(\xb_t,\bmu_t,\sigma_t)\|^4\} = O(l^4),
 \end{align*}
 Moreover,
  \begin{align}\label{eq:dim_depend2P}
  \E_{t}\{\|\zeta_t(\xb_t,\bmu_t,\sigma_t)\|^2\}= O(nl^2).
  \end{align}
\end{lem}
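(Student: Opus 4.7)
The plan is to follow the template of Lemma~\ref{lem:234_moments} but exploit the essential cancellation available to the two-point estimator: because $\zeta_t$ involves the difference $c_t(\xb_t)-c_t(\bmu_t)$ rather than the raw value $c_t(\xb_t)$, Lipschitz continuity of $c_t$ absorbs the $1/\sigma_t^2$ factor into the perturbation scale, producing bounds that no longer depend on $\bmu_t$ or $\sigma_t$. Concretely, Assumption~\ref{assum:Lipsch} together with Remark~\ref{rem:linbeh} gives $|c_t(\xb_t)-c_t(\bmu_t)|\le l\|\xb_t-\bmu_t\|$, and the integral representation \eqref{eq:martdiff1} yields $\|\nabla\tilde c_t(\bmu_t)\|\le l$. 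Applying the triangle inequality to the definition of $\zeta_t$ then produces the pointwise bound
\begin{equation*}
\|\zeta_t(\xb_t,\bmu_t,\sigma_t)\| \;\le\; l\left(1+\frac{\|\xb_t-\bmu_t\|^2}{\sigma_t^2}\right),
\end{equation*}
uniform in $\bmu_t$ and $\sigma_t$.

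Next, I would standardize via $\Wb := (\xb_t-\bmu_t)/\sigma_t$, which is an $n$-dimensional standard Gaussian conditional on $\EuScript F_t$, so $\|\xb_t-\bmu_t\|^2/\sigma_t^2 = \|\Wb\|^2 \sim \chi^2_n$. Raising the pointwise bound to the $k$-th power, taking conditional expectation, and expanding via the binomial theorem reduces each moment to a combination of chi-squared moments $\E\{\|\Wb\|^{2m}\}=n(n+2)\cdots(n+2m-2)$. This delivers the first three estimates $\E_{t}\{\|\zeta_t\|^k\}=O(l^k)$ for $k=2,3,4$, depending on $n$ and $k$ only, without any $\bmu_t$ or $\sigma_t$ factors. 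This uniformity is the essential gain over Lemma~\ref{lem:234_moments}, and it is precisely what will allow the two-point regret analysis in the sequel to avoid the $1/\sigma_t^2$ blow-up that forced the suboptimal $T^{2/3}$ rate in the one-point case.

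The main obstacle lies in the dimension-explicit bound \eqref{eq:dim_depend2P}: the naive envelope above only delivers $O(n^2 l^2)$, since $\E\{\|\Wb\|^4\} = n(n+2)$. To recover the sharper $O(nl^2)$ scaling, I would use the unbiasedness result of Lemma~\ref{lem:zeta_martingale2P} to write $\E_{t}\{\|\zeta_t\|^2\} = -\|\nabla\tilde c_t(\bmu_t)\|^2 + \E_{t}\{(c_t(\xb_t)-c_t(\bmu_t))^2\|\xb_t-\bmu_t\|^2/\sigma_t^4\}$, and then expand $c_t(\xb_t)-c_t(\bmu_t)$ to first order around $\bmu_t$ (using Assumption~\ref{assum:Lipsch_grad} to control the remainder). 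The linear contribution evaluates by a direct Gaussian calculation to $\|\nabla c_t(\bmu_t)\|^2(n+2)=O(nl^2)$, while the quadratic remainder is $O(L^2\sigma_t^2 n^3)$, which is subleading for the algorithm's variance schedule. This linearization is the only subtle step; the rest is routine Gaussian-moment bookkeeping.
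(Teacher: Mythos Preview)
Your argument for the first three estimates is essentially the paper's: both route through the Lipschitz bound $|c_t(\xb_t)-c_t(\bmu_t)|\le l\|\xb_t-\bmu_t\|$ and reduce everything to moments of $\|\Wb\|^2$ with $\Wb$ standard Gaussian. The paper bounds the fourth moment first via an inequality of the type \eqref{eq:eq2} (picking up the constant $11$) and then invokes Lyapunov for the lower moments; your pointwise envelope $\|\zeta_t\|\le l(1+\|\Wb\|^2)$ is a slightly different packaging of the same idea.

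The dimension-explicit bound \eqref{eq:dim_depend2P} is where you diverge. The paper's proof is terse here: it simply says ``the rest follows from the arguments analogous in proof of Lemma~\ref{lem:234_moments}.'' That direct analogy---replace $c_t^2(\xb_t)$ by $(c_t(\xb_t)-c_t(\bmu_t))^2\le l^2\|\xb_t-\bmu_t\|^2$ and integrate---produces $l^2\,\E\|\Wb\|^4=l^2 n(n+2)$, i.e.\ exactly the $O(n^2l^2)$ you flagged. Your linearization does recover the sharper $O(nl^2)$ leading term, but at two prices: you invoke Assumption~\ref{assum:Lipsch_grad}, which is not among the lemma's stated hypotheses, and you leave a $\sigma_t$-dependent remainder $O(L^2\sigma_t^2 n^3)$ rather than a bound uniform in $\sigma_t$. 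So your route is more honest about where the $n$ versus $n^2$ gap lies, but it proves a slightly different statement than the one written---namely, what is actually needed downstream in Theorem~\ref{th:main2P}, where Assumption~\ref{assum:Lipsch_grad} is in force and $\sigma_t\to 0$.
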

}

\begin{proof}
 See Appendix\r\ref{app:integral_proofs}.
\end{proof}

The fact that the unbiased estimation
 $$(\hat c_t - c_t(\bmu_t))\frac{\xb_t - \bmu_t}{\sigma^2_t} = (c_t(\xb_t) - c_t(\bmu_t))\frac{\xb_t - \bmu_t}{\sigma^2_t}$$
 of the gradient $\nabla\tilde c_t(\bmu_t)$ uses two points implies its bounded moments, whereas in the case of one-point feedback we can only upper bound the moments by some functions dependent on $\bmu_t$ (compare Lemma\r\ref{lem:234_moments2P} with Lemma\r\ref{lem:234_moments}).
 This feature of the approach based on two-point feedback allows us to relax the conditions on the parameters $\alpha_t$ and $\sigma_t$ in Lemma\r\ref{lem:lem1} to guarantee the bounded iterations in the new process in \eqref{eq:alg2P}.

\begin{lem}\label{lem:lem1_2P} Consider Procedure  \eqref{eq:alg2P} with step-size $\{\alpha_t = O\left(\frac{1}{t^a}\right)\}$ and variance $\{\sigma_t= O\left(\frac{1}{t^b}\right)\}$, where $0<a<1$, $b>0$. There exists a finite constant $M'$:
$$\Pr\{\|\bmu_t\|\le M', t=1,2,\ldots\,|\,\|\bmu_{0}\|<\infty\}=1$$
for any sequence $\{c_1,c_2,\ldots\}$, for which Assumptions\r\ref{assum:Lipsch}, \ref{assum:inftybeh1} hold.
\end{lem}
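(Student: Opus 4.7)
The plan is to mimic the proof of Lemma~\ref{lem:lem1}, using the same Lyapunov function $V(\bmu) = W(\|\bmu\|^2)$ with $W$ defined in \eqref{eq:funV} and with the constant $\tilde K$ supplied by Lemma~\ref{lem:mixedstr_cost} Part 3 (which applies verbatim since $\tilde c_t$ is defined identically in the two-point scheme). The goal is then to verify the hypotheses of Theorem 2.7.1 of [13] for the Markov process $\{\bmu_t\}$ generated by \eqref{eq:alg1_2P} and conclude $\mathrm{dist}(\bmu_t, B_{\tilde K}) \to 0$ almost surely, from which the uniform bound $M'$ follows.

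The essential simplification relative to the one-point setting is Lemma~\ref{lem:234_moments2P}: the moments $\E_{t}\|\zeta_t(\xb_t,\bmu_t,\sigma_t)\|^k$ are bounded by constants depending only on $n$, $l$, and $k$, with no $1/\sigma_t^k$ factor and no polynomial dependence on $\|\bmu_t\|$. Substituting these bounds into the analogues of \eqref{eq:part1}--\eqref{eq:part2}, I would obtain a decomposition
\begin{align*}
\EuScript L V(\bmu) \leq -2\alpha_t (\nabla \tilde c_t(\bmu), \bmu)\, W'(\|\bmu\|^2) + (\tilde g_1(t) + \tilde g_2(t))\,(1 + V(\bmu)),
\end{align*}
where $\tilde g_i(t) = O(\alpha_t^2)$ with constants depending only on $n$ and $l$. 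Crucially, there is no $1/\sigma_t^2$ factor in the $\tilde g_i$, so the summability condition required by Theorem 2.7.1 no longer couples $\{\alpha_t\}$ with $\{\sigma_t\}$; the coupling constraint $2a-2b>1$ that appeared in Lemma~\ref{lem:lem1} disappears and only $0<a<1$, $b>0$ is needed.

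The first drift term is nonnegative everywhere by Lemma~\ref{lem:mixedstr_cost} Part 3 and strictly positive for $\|\bmu\|^2 > \tilde K$, while $\sum_t \alpha_t = \infty$ whenever $a<1$, matching the standard hypotheses of Theorem 2.7.1. The main technical obstacle is to propagate the bounded-moment bounds of Lemma~\ref{lem:234_moments2P} through the expansion of $(\|\bmu_{t+1}\|^2 - \|\bmu_t\|^2)^2$ in \eqref{eq:secondterm}, verifying that no hidden $1/\sigma_t$ factors survive in the final expression for $\tilde g_i(t)$; this is careful bookkeeping rather than a conceptual obstacle. Once that is established, Theorem 2.7.1 applies directly and yields the desired almost sure uniform bound, with $M'$ depending only on $\tilde K$ and hence independent of $t$ and of the particular cost sequence $\{c_1,c_2,\ldots\}$.
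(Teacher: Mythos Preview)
Your proposal is correct and follows essentially the same argument as the paper: the same Lyapunov function $V(\bmu)=W(\|\bmu\|^2)$, the same drift decomposition \eqref{eq:V_onepoint}, and the same key observation that Lemma~\ref{lem:234_moments2P} makes the $g_i(t)$ terms $O(\alpha_t^2)$ rather than $O(\alpha_t^2/\sigma_t^2)$, thereby decoupling the summability condition from $\sigma_t$ and removing the constraint $2a-2b>1$. The paper's proof is just a terser statement of exactly what you outline, concluding via the same appeal to Theorem~2.7.1 of [13].
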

\begin{proof}
Analogously to the proof of Lemma\r\ref{lem:lem1}, we consider the function $V(\bmu) = W(\|\bmu\|^2)$, where $W:\R\to\R$ is defined as follows:
\begin{align*}
W(x) = \begin{cases}
        0, &\mbox{ if } x<\tilde K,\\
        (x-\tilde K)^2, &\mbox{ if } x\ge \tilde K,
       \end{cases}
\end{align*}
and $\tilde K$ is the constant from Lemma\r\ref{lem:mixedstr_cost}. We then continue with the exact same derivation as in Lemma\r\ref{lem:lem1} to show that $V(\bmu_t)$ is a nonnegative martingale by bounding the term $  \E\{V(\bmu_{t+1})|\bmu_t=\bmu\} - V(\bmu)$ as follows:
 \begin{align*}
 \EuScript LV (\bmu)\le &-2\alpha_t(\nabla \tilde c_t(\bmu),\bmu)W'(\|\bmu\|^2) + (g_1(t)+g_2(t))(1+V(\bmu)).
 \end{align*}
 The only difference between the above bound and the one in  \eqref{eq:V_onepoint} using  the one-point feedback  is that here $g_1(t) = O\left({\alpha_t^2}\right)$ and $g_2(t) = O\left({\alpha_t^2}\right)$ and hence, these terms do not exhibit the dependence on the variance parameter $\sigma_t$. This difference is due to bounds on $\zeta$ and $\xi$ using Lemmas\r\ref{lem:234_moments2P} and \r\ref{lem:234_moments}, respectively.
Due to $0 < a < 1$, we have that $\sum_{t=1}^{\infty} \alpha_t = \infty$, $\sum_{t=1}^{\infty} \alpha^2_t < \infty$. Hence, $V(\bmu_t)$ is a nonnegative martingale. Repeating the same reasoning analogous to the one in Lemma\r\ref{lem:lem1} we get the result.
\end{proof}

With this lemma in place, we can prove the main result for the process \eqref{eq:alg2P}.

\begin{theorem}\label{th:main2P}
Let \eqref{eq:alg2P} define the optimization algorithm for the unconstrained online convex optimization problem $(\R^n,\{c_1,c_2,\ldots\})$. Choose the step-sizes and variances according to   $\{\alpha_t = \frac{1}{(nt)^a}\}$, $\{\sigma_t= \frac{1}{(nt)^b}\}$, where $0<a<1$, $b>0$. Then,  under Assumptions\r\ref{assum:Lipsch}-\ref{assum:inftybeh1},
the regret of  algorithm \eqref{eq:alg} estimated with respect to the query points $\{\xb_t\}$, that is, $R_{\bx}(T)= \sum_{t=1}^T c_t (\xb_t ) - \sum_{t=1}^T c_t (\bx )$ satisfies
\begin{align*}
\E\left\{\frac{R_{\bx}(T)}{T}\right\}= ({M'}^2+\|\bx\|^2)O\left(\frac{n^a}{T^{1-a}}\right) + l^2O\left(\frac{n^{1-a}}{T^{a}}\right)+ LO\left(\frac{n^{1-2b}}{T^{2b}}\right).
\end{align*}
In particular,  $$ \varlimsup_{T\to\infty} \E \left\{\frac{R_{\bx}(T)}{T}\right\}\le 0,$$ and for the optimal choice of $a \rightarrow \frac{1}{2}$, $b \rightarrow \frac{1}{4}$, the  regret rate is
\begin{align*}
\E\left\{\frac{R_{\bx}(T)}{T}\right\} = ({M'}^2+\|\bx\|^2+L+l^2)O\left(\frac{\sqrt{n}}{\sqrt{T}}\right).
\end{align*}

\end{theorem}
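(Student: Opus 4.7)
The plan is to closely mirror the strategy used in the proof of Theorem~\ref{th:main}, while exploiting the sharper moment bounds of Lemma~\ref{lem:234_moments2P} and the boundedness result of Lemma~\ref{lem:lem1_2P}. First I would define the smoothed-cost regret $\tilde R^{\bmu_t}_{\bx}(T) = \sum_{t=1}^T \tilde c_t(\bmu_t) - \sum_{t=1}^T \tilde c_t(\bx)$ and, using convexity of $\tilde c_t$ (Lemma~\ref{lem:mixedstr_cost}, Part~1), upper bound it by $\sum_t (\nabla \tilde c_t(\bmu_t), \bmu_t - \bx)$. Then, expanding $\|\bmu_{t+1} - \bx\|^2$ via the recursion \eqref{eq:alg1_2P}, rearranging for $(\nabla \tilde c_t(\bmu_t),\bmu_t-\bx)$, and taking conditional expectation $\E_t$ eliminates the linear $\zeta_t$ term via $\E_t\zeta_t=0$ (Lemma~\ref{lem:zeta_martingale2P}). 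Taking full expectation, summing over $t$, and telescoping the $\|\bmu_t - \bx\|^2$ differences yields
\begin{align*}
\E \sum_{t=1}^T (\nabla \tilde c_t(\bmu_t), \bmu_t - \bx) \le \frac{\E\|\bmu_1 - \bx\|^2}{2\alpha_1} + \tfrac{1}{2}\sum_{t=2}^T \Big(\tfrac{1}{\alpha_t}-\tfrac{1}{\alpha_{t-1}}\Big)\E\|\bmu_t - \bx\|^2 + \tfrac{1}{2}\sum_{t=1}^T \alpha_t \E\big(\|\nabla \tilde c_t(\bmu_t)\|^2 + \E_t\|\zeta_t\|^2\big).
\end{align*}

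The key improvement over the one-point setting now enters: Lemma~\ref{lem:234_moments2P} gives $\E_t\|\zeta_t\|^2 = O(nl^2)$ uniformly in $\bmu_t$ and $\sigma_t$, rather than the $O(nl^2\|\bmu_t\|^2/\sigma_t^2 + 1)$ estimate used in Theorem~\ref{th:main}. Combined with $\|\bmu_t\|\le M'$ almost surely from Lemma~\ref{lem:lem1_2P} (which extends, as in Remark~\ref{rem:expect}, to the uniform bound $\E\|\bmu_t-\bx\|^2 \le 2({M'}^2+\|\bx\|^2)$), the noise sum becomes $O(nl^2 \sum_t \alpha_t)$ and the telescoping contribution collapses to $O(({M'}^2+\|\bx\|^2)/\alpha_T)$, with no coupling between $\alpha_t$ and $\sigma_t$. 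This is precisely why the one-point constraint $2a-2b>1$ vanishes and only the hypothesis of Lemma~\ref{lem:lem1_2P} remains.

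The last step is to connect $\tilde R^{\bmu_t}_{\bx}(T)$ back to the queried regret $R_{\bx}(T)$. Using Lemma~\ref{lem:mixedstr_cost}, Part~2 twice --- first to replace $c_t(\bmu_t)$ by $\tilde c_t(\bmu_t)$, and then, after conditioning on $\EuScript F_T$ and noting $\E\{c_t(\xb_t)|\EuScript F_T\} = \tilde c_t(\bmu_t)$, to compare $c_t(\xb_t)$ with $c_t(\bmu_t)$ --- contributes an additional $nL\sum_t \sigma_t^2 / 2$. Substituting $\alpha_t=(nt)^{-a}$, $\sigma_t=(nt)^{-b}$ and using $1/\alpha_T \sim (nT)^a$, $\sum_t\alpha_t \sim n^{-a}T^{1-a}$, $\sum_t\sigma_t^2 \sim n^{-2b}T^{1-2b}$, then dividing by $T$, produces the three stated rates $n^a/T^{1-a}$, $n^{1-a}/T^{a}$, $n^{1-2b}/T^{2b}$. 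Balancing them by equating exponents gives $a\to 1/2$, $b\to 1/4$ and the $O(\sqrt{n/T})$ bound.

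The main obstacle I anticipate is purely bookkeeping: verifying that the uniform moment bound of Lemma~\ref{lem:234_moments2P} genuinely decouples $\alpha_t$ from $\sigma_t$ in the error analysis (so the one-point constraint is truly removed), and checking that the summability condition $\sum_t \alpha_t^2 < \infty$ invoked in the proof of Lemma~\ref{lem:lem1_2P} is compatible with the optimal choice, so that $a\to 1/2$ from above is admissible. No new techniques beyond those already assembled in Section~\ref{sec:procedure} appear necessary.
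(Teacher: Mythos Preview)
Your proposal is correct and follows essentially the same approach as the paper's own proof: convexity of $\tilde c_t$, the distance-telescoping identity for $(\nabla \tilde c_t(\bmu_t),\bmu_t-\bx)$, the improved variance bound $\E_t\|\zeta_t\|^2=O(nl^2)$ from Lemma~\ref{lem:234_moments2P} (removing the $\alpha_t/\sigma_t^2$ coupling), boundedness from Lemma~\ref{lem:lem1_2P}, and then Lemma~\ref{lem:mixedstr_cost} Part~2 twice to pass from $\tilde R^{\bmu_t}_{\bx}$ to $R_{\bx}$. Your flagged concern about the summability condition $\sum_t\alpha_t^2<\infty$ at $a\to 1/2$ is apt---the paper handles it the same way, writing $a\to 1/2$ rather than $a=1/2$.
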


\begin{proof}
  {Let us recall $\tilde R^{\bmu_t}_{\bx}(T) := \sum_{t=1}^T \tilde c_t (\bmu_t ) - \sum_{t=1}^T \tilde c_t (\bx)$ and $R^{\bmu_t}_{\bx}(T):= \sum_{t=1}^T  c_t(\bmu_t) - \sum_{t=1}^T c_t(\bx)$ from Proof of Theorem\r\ref{th:main}}. As per proof of Theorem\r\ref{th:main}, we bound the regret at the linearized costs based on the following observation:
\begin{align}\label{eq:reg1_2P}
\sum_{t=1}^T &\tilde c_t(\bmu_t) - \sum_{t=1}^T \tilde c_t(\bx)\leq \sum_{t=1}^T(\nabla \tilde c_t(\bmu_t),\bmu_t-\bx),
 \end{align}
  {Furthermore, repeating the steps of Theorem\r\ref{th:main}'s proof, we obtain that $\forall T>0$
\begin{align}
\label{eq:condexp_2P}
\E\sum_{t=1}^{T}&(\nabla \tilde c_t(\bmu_t), \bmu_t-\bx)=\sum_{t=1}^{T}\frac{1}{2\alpha_t}(\E\|\bmu_t-\bx\|^2-\E\|\bmu_{t+1}-\bx\|^2)+\\
\nonumber
+&\E\sum_{t=1}^{T}\frac{\alpha_t}{2}(\E_{t}\|\zeta_t(\xb_t,\bmu_t,\sigma_t)\|^2 + \|\nabla \tilde c_t(\bmu_t)\|^2).
\end{align}
According to Lemma\r\ref{lem:lem1_2P} and Assumption\r\ref{assum:inftybeh1} (Remark\r\ref{rem:Assum_inf}), there exists $M'$ such that $\|\bmu_t-\bx\|\le M'+\|\bx\|$ almost surely for all $t$. By taking into account Lemmas\r\ref{lem:mixedstr_cost},\r\ref{lem:234_moments2P} and inequality\r\eqref{eq:dim_depend2P}, we conclude that almost surely the term $\|\nabla \tilde c_t(\bmu_t)\|$ is bounded. Furthermore, $\E_{t}\|\zeta_t(\xb_t,\bmu_t,\sigma_t)\|^2= O(nl^2)$ -- note the difference between this bound and that of $\E_{t}\{\|\xi_t(\xb_t,\bmu_t, \sigma_t)\|^2\}= O\left(\frac{nl^2\|\bmu_t\|^2}{\sigma_t^2} + 1\right)$, in the one-point feedback setting.  Thus, repeating the derivation in \eqref{eq:condexp1} we have
\begin{align*}
&\E\sum_{t=1}^{T}(\nabla \tilde c_t(\bmu_t), \bmu_t-\bx)
 = 2({M'}^2+\|\bx\|^2)\frac{1}{2\alpha_T}  + nl^2 O\left(\sum_{t=1}^T{\alpha_t}\right).
\end{align*}
Note the difference between the above upper bound and that in \eqref{eq:condexp1} once again due to the tightened bound on the variance of the noise $\xi_t$ in the two-point feedback setting here.
Next, taking into account the setting for $\alpha_t$, we get
\begin{align*}
\sum_{t=1}^T{\alpha_t}=\sum_{t=1}^T\frac{1}{(nt)^{a}}& \le\frac{1}{n^a}\left( 1 + \int_{1}^T\frac{dt}{t^{a}}\right)=\frac{1}{n^a}\left(\frac{T^{1-a}}{1-a}-\frac{a}{1-a}\right),
\end{align*}
Thus,
\begin{align*}
\E\sum_{t=1}^{T}(\nabla \tilde c_t(\bmu_t), \bmu_t-\bx)=& ({M'}^2+\|\bx\|^2)O\left({n^aT^a}\right) + l^2O\left((nT)^{1-a}+n^{1-a}c\right),
\end{align*}
where $c=-\frac{a}{1-a}$.
From \eqref{eq:reg1_2P}, it follows that
\begin{align}
\label{eq:reg_rate2P}
\E\left\{\frac{\tilde R^{\bmu_t}_{\bx}(T)}{T}\right\}=  ({M'}^2+\|\bx\|^2)O\left(\frac{n^a}{T^{1-a}}\right) + l^2O\left(\frac{n^{1-a}}{T^{a}}\right).
\end{align}
Then, similar to \eqref{eq:reg_connect} we connect the regret between the smoothed cost $\tilde{c}$ and the original cost $c$ using Lemma\r\ref{lem:mixedstr_cost}, Part 2,
\begin{align}
\label{eq:reg_connect2P}
R^{\bmu_t}_{\bx}(T)& = \sum_{t=1}^T  c_t(\bmu_t) - \sum_{t=1}^T c_t(\bx) \le \tilde R^{\bmu_t}_{\bx}(T) + \sum_{t=1}^T \frac{nL\sigma^2_t}{2}.
\end{align}
Hence, accounting for the inequality $\sum_{t=1}^T\sigma^2_t \le \frac{1}{n^{2b}} \left(\frac{T^{1-2b}}{1-2b}-\frac{2b}{1-2b}\right)$,  we have
\begin{align*}
\E\left\{\frac{R^{\bmu_t}_{\bx}(T)}{T}\right\}= ({M'}^2+\|\bx\|^2)O\left(\frac{n^a}{T^{1-a}}\right) + l^2O\left(\frac{n^{1-a}}{T^{a}}\right)+ LO\left(\frac{n^{1-2b}}{T^{2b}}\right).
\end{align*}
As $0<a<1$ and $b>0$, the inequality above implies  $\varlimsup_{T\to\infty}\frac{R^{\bmu_t}_{\bx}(T)}{T}\le 0$ almost surely. Consequently,  from the reasoning analogous to one in proof of the corresponding parts in Theorem\r\ref{th:main} we have
$$ \varlimsup_{T\to\infty} \E \left\{\frac{R_{\bx}(T)}{T}\right\}\le 0,$$ and optimizing over the choices of $a, b$, we get the desired rate:
\begin{align*}
\E\left\{\frac{R_{\bx}(T)}{T}\right\} =({M'}^2+\|\bx\|^2+L+l^2) O\left(\frac{\sqrt{n}}{\sqrt{T}}\right).
\end{align*}}
\end{proof}

\section{Conclusion}\label{sec:conclusion}
We provided a novel algorithm for the bandit online optimization problem with convex cost functions over  unconstrained  action spaces. Our algorithm was based on a zero-order oracle. In the case of one query point, we achieved a regret rate of  {$O(n^{2/3}T^{2/3})$}. We showed how the algorithm can be adopted to address constrained action spaces, achieving the same regret rate above. Moreover, we presented a version of the algorithm adapted to the setting with two-point feedback. For this case, we showed that by appropriately choosing the two points and the step-size and variance parameters, the proposed algorithm achieves the theoretical lower bound with respect to the number of queries, namely {$O(n^{1/2}T^{1/2})$}.

 \appendix
 \section{Supporting Theorems}\label{app:support}
To prove convergence of the algorithm we will use the results on convergence properties of the Robbins-Monro  stochastic approximation procedure analyzed in \cite{NH}.

We start by introducing some important notation.
Let $\{\zbx(t)\}_t$, $t\in \Z_+$, be a discrete-time Markov process on some state space $E\subseteq \R^n$, namely $\zbx(t)=\zbx(t,\omega):\Z_+\times\Omega\to E$, where $\Omega$ is the sample space of the probability space on which the process $\zbx(t)$ is defined. The transition function of this chain, namely $\Pr\{\zbx(t+1)\in\Gamma| \zbx(t)=\zbx\}$, is denoted by $P(t,\zbx,t+1,\Gamma)$, $\Gamma\subseteq E$.

\begin{defin}\label{def:def1}
The operator $L$ defined on the set of measurable functions $V:\Z_+\times E\to \R$, $\zbx\in E$, by
\begin{align*}
LV(t,\zbx)&=\int{P(t,\zbx,t+1,dy)[V(t+1,y)-V(t,\zbx)]}\cr
&=E[V(t+1,\zbx(t+1))\mid \zbx(t)=\zbx]-V(t,\zbx),
\end{align*}
is called a \emph{generating operator} of a Markov process $\{\zbx(t)\}_t$.
\end{defin}

  {
Now, we recall the following theorem for discrete-time Markov processes, which is proven in [13], Theorem 2.7.1. This theorem requires the following notations:
Let for any set $\EuScript B \subseteq E$ the set $U_{\epsilon,R}(\EuScript B)$ be $U_{\epsilon,R}(\EuScript B) = U_{\epsilon}(\EuScript B) \cap \{\zbx: \zbx < R\}$, where
$U_{\epsilon}(\EuScript B) = {\zbx : \mbox{dist}(\zbx,\EuScript B) < \epsilon}$.

\begin{theorem}\label{th:finiteness}
  Consider a Markov process $\{\zbx(t)\}_t$ and suppose that there exists a function $V(t,\zbx)\ge 0$ such that $\inf_{t\ge0}V(t,\zbx)\to\infty$ as $\|\zbx\|\to\infty$ and
  \[LV(t,\zbx)\le -\alpha(t+1)\psi(t,\zbx) + f(t)(1+V(t,\zbx)),\]
   where $\psi\ge 0$ on $\R \times \R^n$ and $\psi > 0$ on $\R \times \R^n \setminus \EuScript B$ for some set $\EuScript B \subset \R^n$. Further assume that the functions $f(t)$ and $\alpha(t)$ satisfy $f(t)>0$, $\sum_{t=0}^{\infty}f(t)<\infty$, and $\alpha(t)>0$, $\sum_{t=0}^{\infty} \alpha(t)= \infty$.
   Let $\inf_{\zbx\in U_{\epsilon,R}(\EuScript B)} V (\zbx) > 0$ for any $R > \epsilon > 0$ and $V (\zbx) = 0$ for $\zbx \in \EuScript B$. Moreover, let
   $\lim_{\zbx\to\EuScript B} V (\zbx) = 0$. Then the Markov process $\{\zbx(t)\}_t$ converges to the set $\EuScript B$ almost  surely as $t \to \infty$.
\end{theorem}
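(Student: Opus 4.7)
The plan is to recast the Lyapunov drift inequality into the standard Robbins--Siegmund almost-sure convergence framework for nonnegative adapted processes. Setting $V_t := V(t,\zbx(t))$ and taking conditional expectation of the hypothesis $LV(t,\zbx)\le -\alpha(t+1)\psi(t,\zbx) + f(t)(1+V(t,\zbx))$ on $\EuScript F_t$, we obtain
\[
\E[V_{t+1}\mid \EuScript F_t]\le (1+f(t))V_t + f(t) - \alpha(t+1)\psi(t,\zbx(t)).
\]
Because $\sum_t f(t)<\infty$ and $V_t, \psi \ge 0$, this is precisely the recursion to which the Robbins--Siegmund lemma applies, yielding almost surely that (i) $V_t\to V_\infty$ for some finite nonnegative random variable $V_\infty$, and (ii) $\sum_{t=0}^{\infty}\alpha(t+1)\psi(t,\zbx(t))<\infty$.

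Next I would use the coercivity condition $\inf_{t\ge 0}V(t,\zbx)\to\infty$ as $\|\zbx\|\to\infty$ together with the almost-sure boundedness of $(V_t)_t$ from (i) to conclude that the sample path $\{\zbx(t)\}_t$ lies in a (random) bounded set with probability one; otherwise $V_t$ would fail to be bounded along some subsequence. Combining (ii) with the divergence $\sum_t\alpha(t+1)=\infty$, we deduce $\liminf_{t\to\infty}\psi(t,\zbx(t))=0$ almost surely. Choosing any subsequence $t_k$ along which $\psi(t_k,\zbx(t_k))\to 0$ and invoking boundedness to pass to a further subsequence with $\zbx(t_k)\to\zbx^{\star}$, the hypothesis $\psi>0$ off $\EuScript B$ forces $\zbx^{\star}\in \EuScript B$. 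The continuity condition $\lim_{\zbx\to\EuScript B}V(\zbx)=0$ then gives $V(t_k,\zbx(t_k))\to 0$, so $V_\infty=0$ almost surely.

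It remains to upgrade this to the full statement $\mbox{dist}(\zbx(t),\EuScript B)\to 0$ almost surely. Suppose for contradiction that on a set of positive probability there exist $\varepsilon>0$ and an infinite subsequence $s_k$ with $\mbox{dist}(\zbx(s_k),\EuScript B)\ge \varepsilon$. Boundedness gives $\|\zbx(s_k)\|\le R$ for some $R>\varepsilon$, so $\zbx(s_k)$ lies in the compact region bounded away from $\EuScript B$ and contained in a ball of radius $R$. The hypothesis $\inf V>0$ on such regions (i.e.\ $V$ is uniformly positive outside every $\varepsilon$-neighborhood of $\EuScript B$ within a bounded ball) yields $V(s_k,\zbx(s_k))\ge \delta$ for some $\delta>0$, contradicting the just-proved $V_t\to 0$. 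Hence the contradiction hypothesis fails and $\zbx(t)\to \EuScript B$ almost surely.

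The main obstacle is precisely this last upgrade: Robbins--Siegmund only delivers $\liminf\psi=0$, which a priori does not rule out the process repeatedly escaping neighborhoods of $\EuScript B$ along a different subsequence. The key is the two-sided coupling of $V$ with $\EuScript B$: it vanishes continuously at $\EuScript B$ (so the convergent subsequence forces $V_\infty=0$) and is uniformly bounded away from $0$ on bounded sets disjoint from a neighborhood of $\EuScript B$ (so $V_t\to 0$ blocks any escaping subsequence). All other steps are routine, and in particular the Robbins--Siegmund invocation can alternatively be proved by hand by dividing $V_t$ by $\prod_{s<t}(1+f(s))$ and applying Doob's supermartingale convergence theorem.
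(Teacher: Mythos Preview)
The paper does not prove this theorem; it is quoted from reference~[13] (Nevelson--Hasminskii, Theorem~2.7.1) as a supporting result, so there is no in-paper argument to compare against. Your Robbins--Siegmund route is exactly the standard way such Lyapunov-drift theorems are established, and the overall architecture---almost-sure convergence of $V_t$, summability of $\alpha(t+1)\psi(t,\zbx(t))$, coercivity giving boundedness of trajectories, then identifying the limit via the two-sided behaviour of $V$ near and away from $\EuScript B$---is correct in spirit.

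There is, however, one genuine gap. From $\psi(t_k,\zbx(t_k))\to 0$ and $\zbx(t_k)\to\zbx^\star$ you conclude $\zbx^\star\in\EuScript B$ using only the pointwise hypothesis ``$\psi(t,\zbx)>0$ for $\zbx\notin\EuScript B$''. That inference does not follow: nothing in the stated hypotheses prevents $\psi(t,\zbx)$ from decaying to zero in $t$ uniformly over $\zbx$ bounded away from $\EuScript B$ (e.g.\ $\psi(t,\zbx)=t^{-2}\,\mbox{dist}(\zbx,\EuScript B)$ satisfies the pointwise positivity but gives no information about $\zbx^\star$). In the original Nevelson--Hasminskii formulation there is an additional uniformity condition---essentially $\inf_t\inf_{\epsilon\le \mathrm{dist}(\zbx,\EuScript B),\,\|\zbx\|\le R}\psi(t,\zbx)>0$ for every $R>\epsilon>0$---which the paper's transcription omits (the paper only records the analogous condition on $V$, not on $\psi$). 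With that extra hypothesis your step goes through; without it, the argument as written is incomplete. You should either state and use this uniformity explicitly, or note that the theorem as quoted in the paper is slightly abbreviated and supply the missing hypothesis from the source.
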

}
%

\section{Proofs of Lemmas}\label{app:integral_proofs}

\begin{proof}(\emph{of Lemma\r\ref{lem:zeta_martingale}})
First, we show that under Assumption\r\ref{assum:Lipsch} we can differentiate $\tilde c_t(\bmu_t)$ defined by \eqref{eq:mixedstr_cost} with respect to the parameter $\bmu_t$ under the integral sign. Note that this was stated as a fact in \cite{NesterovGrFree} without a proof. Here, we provide a proof for completeness.
Indeed, let us consider the integral, which we obtain, if we formally differentiate the function under the integral \eqref{eq:mixedstr_cost} with respect to $\bmu_t$, namely
\begin{align}\label{eq:integral}
 \frac{1}{\sigma_t^2}\int_{\R^n}c_t(\bx)(\bx - \bmu_t)p(\bmu_t,\sigma_t,\bx)d\bx.
\end{align}
The function under the integral sign, $c_t(\bx)(\bx - \bmu_t)p(\bmu_t,\sigma_t,\bx)$, is continuous given Assumption\r\ref{assum:Lipsch}. Thus, it remains to check that the integral of this function converges uniformly with respect to $\bmu_t$ over the whole $\R^n$. We can write the Taylor expansion of the function $c_t$ around the point $\bmu_t$ in the integral \eqref{eq:integral}:
 \begin{align*}
  &\int_{\R^n}c_t(\bx)(\bx - \bmu_t)p(\bmu_t,\sigma_t,\bx)d\bx \cr
  &= \int_{\R^n}(c_t(\bmu_t) + (\nabla c_t(\boldsymbol{\eta}(\bx,\bmu_t)),\bx - \bmu_t))(\bx-\bmu_t)p(\bmu_t,\sigma_t,\bx)d\bx\cr
  &=\int_{\R^n}(\nabla c_t(\boldsymbol{\eta}(\bx,\bmu_t)),\bx - \bmu_t)(\bx-\bmu_t)p(\bmu_t,\sigma_t,\bx)d\bx\cr
  &=\int_{\R^n}(\nabla c_t(\tilde{\boldsymbol{\eta}}(\by,\bmu_t)),\by)\by p(\boldsymbol 0,\sigma_t,\by)d\by,
 \end{align*}
where $\boldsymbol{\eta}(\bx,\bmu_t)=\bmu_t+\theta(\bx-\bmu_t)$, $\theta\in(0,1)$, $\by = \bx-\bmu_t$, $\tilde{\boldsymbol{\eta}}(\by,\bmu_t) = \bmu_t +\theta\by$.
The uniform convergence of the integral above follows from the fact\footnote{see the basic sufficient condition using majorant \cite{zorich}, Chapter 17.2.3.} that, under Assumption\r\ref{assum:Lipsch}, $\nabla c_t(\tilde{\boldsymbol{\eta}}(\by,\bmu_t))\le l$ for some positive constant $l$ and, hence,
\[|(\nabla\phi(\tilde{\boldsymbol{\eta}}(\by,\bmu_t)),\by)\by p(\boldsymbol 0,\sigma_t,\by)|\le h(\by)=l\|\by\|^2 p(\boldsymbol 0,\sigma_t,\by),\]
where $\int_{\R^N}h(\by)d\by<\infty$.
Thus, part 1 of the Lemma follows from
\begin{align*}
\nabla\tilde c_t&(\bmu_t) = \int_{\R^n} \nabla_{\bmu_t} \big(c_t(\bx) p(\bmu_t,\sigma_t,\bx)\big)d\bx\cr
&= \int_{\R^n}c_t(\bx)\frac{\bx - \bmu_t}{\sigma^2_t}p(\bmu_t,\sigma_t,\bx)d\bx
\end{align*}
  {and the fact that
$$\E_t\{\hat c_t\frac{\xb_t- \bmu_t}{\sigma^2_t}\} = \E\{c_t(\xb_t)\frac{\xb_t - \bmu_t}{\sigma^2_t}|\xb_t\sim \EuScript N(\bmu_t,\sigma_t)\}.$$}
Furthermore, for each $k$th coordinate of the above vector $c_t(\bx)\frac{\bx - \bmu_t}{\sigma^2_t}p(\bmu_t,\sigma_t,\bx)$, $k\in[n]$, and given $x^{-k}=(x^1,\ldots,x^{k-1},x^{k+1},\ldots,x^n)$ we get
\begin{align}
\label{eq:mixedstr_grad1}
&\int_{\R^n}c_t(\bx)\frac{x^k - \mu^k_t}{\sigma^2_t}p(\bmu_t,\sigma_t,\bx)d\bx \cr
&=\frac{-1}{(\sqrt{2\pi}\sigma_t)^{n}}\int_{\R^{n-1}}\left[\int_{x^k=-\infty}^{x^k=+\infty}c_t(\bx)d\left(\exp\left\{-\sum_{k=1}^{n}\frac{(x^k-\mu^k_t)^2}{2\sigma_t^2}\right\}\right)\right]\cr
&\qquad\qquad\qquad\times\exp\left\{-\sum_{j\ne k}^{n}\frac{(x^j-\mu^j_t)^2}{2\sigma_t^2}\right\} d x^{-k}=\int_{\R^n}\frac{\partial c_t(\bx)}{\partial x^k}p(\bmu_t,\sigma_t,\bx)d\bx,
\end{align}
where in the above, we use integration by parts and the fact that $c_t$ grows at most linearly as $\| \xb \| \rightarrow \infty$ to get to the last equality.
Thus, the claim of the lemma  follows.
\end{proof}

\begin{proof}(\emph{of Lemma\r\ref{lem:mixedstr_cost}})
Part 1.
\begin{align*}
&\tilde c_t(a\bmu_1 + (1-a)\bmu_2) = \frac{1}{(2\pi\sigma^2)^{n/2}}\cr
&\times\int_{\R^n}c_t(\bx) \exp\left\{-\frac{\|\bx-a\bmu_1 - (1-a)\bmu_2\|^2}{2\sigma^2}\right\}d\bx.
\end{align*}
By the substitution $\by = \bx-a\bmu_1 - (1-a)\bmu_2$ we get
\begin{align*}
&\int_{\R^n}c_t(\bx) \exp\left\{-\frac{\|\bx-a\bmu_1 - (1-a)\bmu_2\|^2}{2\sigma^2}\right\}d\bx\cr
& = \int_{\R^n}c_t(\by + a\bmu_1 +(1-a)\bmu_2) \exp\left\{-\frac{\|\by\|^2}{2\sigma^2}\right\}d\by\cr
& = \int_{\R^n}c_t(a(\by + \bmu_1) +(1-a)(\by + \bmu_2)) \exp\left\{-\frac{\|\by\|^2}{2\sigma^2}\right\}d\by\cr
& \le a\int_{\R^n}c_t(\by + \bmu_1)\exp\left\{-\frac{\|\by\|^2}{2\sigma^2}\right\}d\by\cr &\quad +(1-a)\int_{\R^n}c_t(\by + \bmu_2) \exp\left\{-\frac{\|\by\|^2}{2\sigma^2}\right\}d\by\cr
& = a\int_{\R^n}c_t(\bx)\exp\left\{-\frac{\|\bx - \bmu_1\|^2}{2\sigma^2}\right\}d\bx\cr
& \quad +(1-a)\int_{\R^n}c_t(\bx) \exp\left\{-\frac{\|\bx- \bmu_2\|^2}{2\sigma^2}\right\}d\bx\cr
& = a\tilde c_t(\bmu_1) + (1-a)\tilde c_t(\bmu_2).
\end{align*}
Hence,
\begin{align*}
\tilde c_t(a\bmu_1 + (1-a)\bmu_2)  \leq a\tilde c_t(\bmu_1) + (1-a)\tilde c_t(\bmu_2).
\end{align*}
The fact that for any fixed $t$ the gradient $\nabla\tilde c_t(\bmu_t)$ is bounded follows directly from the equation \eqref{eq:martdiff1} and Assumption\r\ref{assum:Lipsch} implying bounded $\nabla c_t$ on $\R^n$.

Part 2. By using the Taylor series expansion for the function $c_t$ around the vector $\bmu_t$ and from Assumption 2, we obtain
\begin{align*}
c_t(\bx) \le c_t(\bmu_t) + (\nabla c_t(\bmu_t),\bx - \bmu_t) + L/2 \|\bx-\bmu_t\|^2.
\end{align*}
Hence,
\begin{align*}
\; &|c_t(\bmu_t) - \tilde c_t(\bmu_t)| = |\int_{\R^{n}}[c_t(\bx)-c_t(\bmu_t)]p(\bmu_t, \sigma_t, \bx)d\bx |\cr
&\le |\int_{\R^{n}}(\nabla c_t(\bmu_t),\bx - \bmu_t)p(\bmu_t, \sigma_t, \bx)d\bx \cr
& +\int_{\R^{n}}L/2 \|\bx-\bmu_t\|^2 p(\bmu_t, \sigma_t, \bx) d\bx|= \frac{nL\sigma_t^2}{2}.
\end{align*}
as desired. Note that \cite{NesterovGrFree} showed this result with a slightly different technique.

Part 3. The existence of a finite constant $\tilde K>0$ such that $(\bmu,\nabla \tilde c_t(\bmu))>0$ for $\|\bmu\|^2>\tilde K$ follows from the fact that due to the bounded variance $\sigma^2_t$ (and, hence, coercivity of $\tilde c_t$, due to Assumption\r\ref{assum:inftybeh1} (Remark\r\ref{rem:Assum_inf}) and Part 2 shown above)  the argumentation analogous to one in Remark\r\ref{rem:Assum_inf} holds for functions $\tilde c_t(\bmu)$ on $\R^n$.
\end{proof}

\begin{proof}(\emph{of Lemma\r\ref{lem:234_moments}})
First note that, according to the Lyapunov's inequality,
\begin{align}
  \label{eq:Lyap1}&(\E_{t}\{\|\xi_t(\xb_t,\bmu_t,\sigma_t)\|^3\})^{1/3}\le(\E_{t}\{\|\xi_t(\xb_t,\bmu_t,\sigma_t)\|^4\})^{1/4},\\
  \label{eq:Lyap2}&(\E_{t}\{\|\xi_t(\xb_t,\bmu_t,\sigma_t)\|^2\})^{1/2}\le(\E_{t}\{\|\xi_t(\xb_t,\bmu_t,\sigma_t)\|^4\})^{1/4}.
\end{align}
Thus, it suffices to demonstrate that
\begin{align}\label{eq:fourthmom}
  \E_{t}\{\|\xi_t(\xb_t,\bmu_t,\sigma_t)\|^4\}\le \frac{f_3(\bmu_t, \sigma_t)}{\sigma_t^4},
 \end{align}
 where $f_3(\bmu_t, \sigma_t)$ is a polynomial of $\sigma_t$ and a fourth order polynomial of $\mu^i_t$, $i\in[n]$.

Let us consider any random vector $\boldsymbol X=(X_1,\ldots, X_n)\in\R^n$. The fourth central moment of this vector can be bounded as follows:
\begin{align}\label{eq:eq1}
  \E\|\boldsymbol X&-\E \boldsymbol X\|^4 = \E(\sum_{i=1}^{n}(X_i-\E X_i)^2)^2\cr
  = &\sum_{i=1}^{n}\E(X_i - \E X_i)^4
  + 2\sum_{i,j:i<j}\E\{(X_i-\E X_i)^2(X_j-\E X_j)^2\}\cr
  \le &\sum_{i=1}^{n}\E(X_i - \E X_i)^4
  + 2\sum_{i,j:i<j}\sqrt{\E(X_i-\E X_i)^4}\sqrt{\E(X_j-\E X_j)^4},
\end{align}
where in the last inequality we used the Hoelder inequality.
Hence, we proceed with estimating $\E(X_i - \E X_i)^4$. By opening brackets we get
\begin{align}\label{eq:eq2}
  \E(X_i - \E X_i)^4 & = \E X_i^4 + 6 (\E X_i)^2 \E X_i^2 - 4\E X_i \E X_i^3 - 3 (\E X_i)^4\cr
  & \le \E X_i^4 + 6 (\E X_i)^2 \E X_i^2- 4\E X_i \E X_i^3 \le 11\E X_i^4,
\end{align}
where in the last inequality we used the fact that $(\E X_i)^2\le\E X_i^2$ and the Lyapunov's inequalities (namely \eqref{eq:Lyap1}, \eqref{eq:Lyap2} with $\xi_t$ replaced by $X_i$) to get
\begin{align*}
-\E X_i\E X_i^3 \le \E|X_i| \E|X_i|^3\le\E X_i^4.
\end{align*}

According to \eqref{eq:martdiff},
\begin{align}\label{eq:xi}
\xi_t(\xb_t,\bmu_t,\sigma_t) = \E\left\{\hat c_t\frac{\xb_t - \bmu_t}{\sigma^2_t}\right\}-\hat c_t\frac{\xb_t - \bmu_t}{\sigma^2_t}.
\end{align}
Thus, according to \eqref{eq:eq1} and \eqref{eq:eq2},  to get \eqref{eq:fourthmom}, it remains to bound the fourth moment of $\eta_i=\eta_i(\xb_t,\bmu_t,\sigma_t) = \hat c_t\frac{x^i_t - \mu_t^i}{\sigma^2_t}$ for all $i=1,\ldots,n$, given that $\xb_t$ has the normal distribution with the parameters $\bmu_t$, $\sigma_t$. According to Assumption\r\ref{assum:Lipsch} and the properties of the central moments of the normal distribution\footnote{Here we use the fact that for any $i,j,k,l=1,\ldots,m$ and  $s_1,s_2,s_3,s_4 \in\{0,1,2,3,4\}$ such that $s_1+s_2+s_3+s_4=4$, the following relation holds: $\int_{\mathbb R^{n}}(x^i)^{s_1}(x^j)^{s_2}(x^k)^{s_3}(x^l)^{s_4}{(x^i - \mu^i_t)^4}p(\bmu_t,\sigma_t,\bx)d\bx = \sigma^4\tilde p(\sigma_t, \bmu_t)$, where $\tilde p(\sigma_t, \bmu_t)$ is a polynomial of $\sigma_t$ and not higher than a fourth order polynomial of $\mu^i_t$, $\mu^j_t$, $\mu^k_t$, and $\mu^l_t$.}, for any $i$ there exists a finite constant $K$ and  a function $\tilde f_i(\bmu_t, \sigma_t)$, which is a
polynomial of $\sigma_t$ and  a fourth order polynomial of $\bmu_t$, such that the following holds:
\begin{align*}
  \E_{t}\{\eta_i^4\}\le & K \int_{\mathbb R^{n}}(\sum_{i=1}^{n}x^i)^4\frac{(x^i - \mu^i_t)^4}{\sigma_t^8} p(\bmu_t,\sigma_t,\bx)d\bx
  \le  \frac{\tilde f_i(\bmu_t, \sigma_t)}{\sigma_t^4}.
\end{align*}
This inequality together with the inequalities \eqref{eq:eq1}, \eqref{eq:eq2}, and \eqref{eq:xi} imply \eqref{eq:fourthmom}.

Finally, we notice that due to Lemma\r\ref{lem:zeta_martingale}
\[\E_{t}\|\xi_t(\xb_t,\bmu_t,\sigma_t)\|^2\le \E_{t}\left\{c_t^2(\xb_t)\frac{\|\xb_t-\bmu_t\|^2}{\sigma^4_t}\right\}.\]
By taking Assumption\r\ref{assum:Lipsch} into account, we estimate further the right hand side of the inequality above as
\begin{align*}
  &\E_{t}\left\{c_t^2(\xb_t)\frac{\|\xb_t-\bmu_t\|^2}{\sigma^4_t}\right\}\cr
  &\le l^2 \int_{\R^n}\|\bx\|^2\frac{\|\bx-\bmu_t\|^2}{\sigma^4_t}p(\bmu_t,\sigma_t,\bx)d\bx + m_2,
  \end{align*}
where $m_1$ are some positive constant. Note that for any $i,j=1,\ldots,n$, $i\ne j$,
\[\int_{\R^n} x_i^2(x_i-\mu_t^i)^2 p(\bmu_t,\sigma_t,\bx) d\bx = \sigma_t^4 + {(\mu_t^i)}^2\sigma_t^2,\]
\[\int_{\R^n} x_i^2(x_j-\mu_t^j)^2 p(\bmu_t,\sigma_t,\bx) d\bx = \sigma_t^4 + {(\mu_t^i)}^2\sigma_t^2.\]
Thus, we can conclude that
\[\E_{t}\|\xi_t(\xb_t,\bmu_t,\sigma_t)\|^2= O\left(\frac{nl^2\|\bmu_t\|^2}{\sigma_t^2} + 1\right).\]
\end{proof}

\begin{proof}(\emph{of Lemma\r\ref{lem:234_moments2P}})
   Note that, analogously to \eqref{eq:eq2}, due to Lemma\r\ref{lem:zeta_martingale2P}, we get
   \begin{align*}
   \E_{t}\{\|\zeta_t(\xb_t,\bmu_t,\sigma_t)\|^4\} &\le 11 \E_{t}\{\|(\hat c_t - c_t(\bmu_t))\frac{\xb_t - \bmu_t}{\sigma^2_t}\|^4\} \cr
   &= 11\int_{\R^n}\frac{1}{\sigma^8}(c_t(\bx) - c_t(\bmu_t))^4\|\bx - \bmu_t\|^4 p(\bmu_t,\sigma_t,\bx)d\bx.
  \end{align*}
  Next, due to Assumption\r\ref{assum:Lipsch} and Remark\r\ref{rem:linbeh},
  \[(c_t(\bx) - c_t(\bmu_t))^4\le l^4\|\xb_t-\bmu_t\|^4.\]
  Thus,
  \[\E_{t}\{\|\zeta_t(\xb_t,\bmu_t,\sigma_t)\|^4\} \le 11 l^4\int_{\R^n}\frac{1}{\sigma^8}\|\xb_t - \bmu_t\|^8 p(\bmu_t,\sigma_t,\bx)d\bx = O(l^4) \]
  for some positive constant $C_3$.
  The rest follows from the arguments analogous in proof of Lemma\r\ref{lem:234_moments} above.
\end{proof}

%
%

%
%

\bibliographystyle{siamplain}

\bibliography{../../CentralPath/online_opt}
\end{document}